\documentclass{amsart}
\usepackage[utf8]{inputenc}
\usepackage{commath}
\usepackage[ maxnames=4,maxalphanames=4,style=alphabetic-verb,doi=false,
  url=false,
  isbn=false,
  eprint=false
]{biblatex}
\bibliography{main.bib}
\usepackage[margin=1in]{geometry}
\usepackage{graphicx}
\usepackage{caption}
\usepackage{subcaption}
\usepackage{amsmath,amsfonts,amssymb,amsthm,amsaddr,etoolbox}
\usepackage{latexsym,hyperref}
\usepackage[most]{tcolorbox}
\usepackage{xcolor}
\usepackage{scrextend}
\usepackage{quiver}
\usepackage{enumitem}
\usepackage[normalem]{ulem}
\usepackage{mathtools}
\usepackage[linewidth=1pt]{mdframed}
\usepackage{tikz-cd}
\usepackage{float}

\author[R. Easwar, A. Kuber and M. Mittal]{Rohun Easwar, Amit Kuber and Mihir Mittal}
\address{Department of Mathematics and Statistics\\Indian Institute of Technology, Kanpur\\ Uttar Pradesh 208016, India}
\email{rohuneaswar@gmail.com, askuber@iitk.ac.in, mihirmittal24@gmail.com}
\title{On Zeta functions and $\mu$-series of string algebras}
\keywords{string algebra, growth of tame algebras, zeta function, state graph}
\subjclass[2020]{16G60, 11M14}

\newcommand\restr[2]{{
  \left.\kern-\nulldelimiterspace 
  #1 
  \littletaller 
  \right|_{#2} 
  }}

\theoremstyle{plain}
\newtheorem{defn}{Definition}[section]

\newtheorem{lem}[defn]{Lemma}
\newtheorem{conj}{Conjecture}

\newtheorem{prop}[defn]{Proposition}
\newtheorem*{prop*}{Proposition}
\newtheorem*{thm*}{Theorem}
\newtheorem{thm}[defn]{Theorem}
\newtheorem{cor}[defn]{Corollary}

\newtheorem*{claim*}{Claim}

\theoremstyle{remark}
\newtheorem{rem}[defn]{Remark}
\theoremstyle{remark}

\theoremstyle{remark}

\theoremstyle{remark}
\newtheorem{exmp}[defn]{Example}
\theoremstyle{remark}

\theoremstyle{remark}

\theoremstyle{remark}
\newtheorem{fact}[defn]{Fact}
\theoremstyle{remark}

\theoremstyle{remark}

\numberwithin{equation}{section}

\theoremstyle{definition}
\newtheorem{introthm}{Theorem}
\newtheorem{introcor}[introthm]{Corollary}

\makeatletter
\let\save@mathaccent\mathaccent
\newcommand*\if@single[3]{%
  \setbox0\hbox{${\mathaccent"0362{#1}}^H$}%
  \setbox2\hbox{${\mathaccent"0362{\kern0pt#1}}^H$}%
  \ifdim\ht0=\ht2 #3\else #2\fi
  }
\newcommand*\rel@kern[1]{\kern#1\dimexpr\macc@kerna}
\newcommand*\widebar[1]{\@ifnextchar^{{\wide@bar{#1}{0}}}{\wide@bar{#1}{1}}}
\newcommand*\wide@bar[2]{\if@single{#1}{\wide@bar@{#1}{#2}{1}}{\wide@bar@{#1}{#2}{2}}}
\newcommand*\wide@bar@[3]{%
  \begingroup
  \def\mathaccent##1##2{%
    \let\mathaccent\save@mathaccent
    \if#32 \let\macc@nucleus\first@char \fi
    \setbox\z@\hbox{$\macc@style{\macc@nucleus}_{}$}%
    \setbox\tw@\hbox{$\macc@style{\macc@nucleus}{}_{}$}%
    \dimen@\wd\tw@
    \advance\dimen@-\wd\z@
    \divide\dimen@ 3
    \@tempdima\wd\tw@
    \advance\@tempdima-\scriptspace
    \divide\@tempdima 10
    \advance\dimen@-\@tempdima
    \ifdim\dimen@>\z@ \dimen@0pt\fi
    \rel@kern{0.6}\kern-\dimen@
    \if#31
      \overline{\rel@kern{-0.6}\kern\dimen@\macc@nucleus\rel@kern{0.4}\kern\dimen@}%
      \advance\dimen@0.4\dimexpr\macc@kerna
      \let\final@kern#2%
      \ifdim\dimen@<\z@ \let\final@kern1\fi
      \if\final@kern1 \kern-\dimen@\fi
    \else
      \overline{\rel@kern{-0.6}\kern\dimen@#1}%
    \fi
  }%
  \macc@depth\@ne
  \let\math@bgroup\@empty \let\math@egroup\macc@set@skewchar
  \mathsurround\z@ \frozen@everymath{\mathgroup\macc@group\relax}%
  \macc@set@skewchar\relax
  \let\mathaccentV\macc@nested@a
  \if#31
    \macc@nested@a\relax111{#1}%
  \else
    \def\gobble@till@marker##1\endmarker{}%
    \futurelet\first@char\gobble@till@marker#1\endmarker
    \ifcat\noexpand\first@char A\else
      \def\first@char{}%
    \fi
    \macc@nested@a\relax111{\first@char}%
  \fi
  \endgroup
}
\makeatother

\def\R{\mathbb R}

\def\d{\delta}

\def\K{\mathcal K}

\def\B{\mathfrak{b}}
\def\U{\mathfrak{u}}
\def\V{\mathfrak{v}}
\def\W{\mathfrak{w}}

\def\Ba{\mathrm{Ba}}

\begin{document}
\begin{abstract}
Let $\overline\mu_\Lambda(t):=\sum\limits_{m\geq1}\mu_\Lambda(m)t^m$ be the \emph{$\mu$-series} of a finite-dimensional tame algebra $\Lambda$ over an algebraically closed field, where $\mu_\Lambda(m)$ denotes the  minimal number of one-parameter families of $\Lambda$-modules with total dimension $m$. When $\Lambda$ is a string algebra with $\mathrm{Ba}(\Lambda)$ as its set of bands up to cyclic permutation, define the \emph{zeta function} $\zeta_\Lambda(t):=\prod\limits_{\mathfrak b\in\mathrm{Ba}(\Lambda)}(1-t^{|\mathfrak b|})^{-1}$, where $|\B|$ is the length of $\B$. We prove an analogue of the prime number theorem for string algebras and use it to conclude that non-domestic string algebras are of exponential growth. Finally, we show that a string algebra is domestic if and only if its $\mu$-series is rational.
\end{abstract}
\maketitle
\section{Introduction}
In the representation theory of finite-dimensional associative algebras over an algebraically closed field $\K$, the trichotomy result due to Drozd \cite{DrozdRT3} and Crawley-Boevey \cite{CB} partitions all such algebras into finite, tame, or wild representation type--the names signify the tractability of the classification problem for their module categories. 

Recall that a finite-dimensional $\K$-algebra $\Lambda$ of infinite representation type is said to be \emph{tame} if, for each $m\ge1$, there exist finitely many $\Lambda$-$\K[x]$-bimodules $M_1, \cdots M_s$, that are free of finite rank as right $\K[x]$-modules, such that (up to isomorphism) all but finitely many indecomposable $m$-dimensional $\Lambda$-modules are isomorphic to a module of the form $M_i \otimes_{\K[x]} S$ with $S$ a simple $\K[x]$-module. Let $\mu_\Lambda(m)$ denote the minimal number of such bimodules with total dimension $m$.

The \emph{$\mu$-series} of $\Lambda$ is the formal power series $\bar{\mu}_\Lambda(t):=\sum\limits_{m\ge 1}\mu_\Lambda(m)t^m$.

The class of tame algebras can be further partitioned. A tame algebra $\Lambda$ is said to be \begin{itemize}
        \item \emph{domestic} if there exists some $n \geq 0$ such that $\mu_\Lambda(m) \leq n$ for each $m\ge1$;
        \item of \emph{polynomial growth} if there exists some $n \geq 1$ such that $\mu_\Lambda(m) \leq m^n$ for each $m\ge1$;
        \item of \emph{exponential growth} if for each $n \geq 1$ there exists some $m \geq 1$ with $\mu_\Lambda(m) > m^n$.
    \end{itemize}

Among tame algebras, string algebras form an important test subclass for various conjectures, since their module categories are completely understood via combinatorics. Indeed, a classification result of Butler and Ringel \cite{Butler1987AuslanderreitenSW} states that any finite-dimensional module over a string algebra $\Lambda=\K Q/I$--presented using a bound quiver $(Q,I)$--can be described using combinatorial entities called \emph{strings} and \emph{bands}, that are certain walks and certain cyclic walks respectively in the quiver $Q$. As a consequence, the $\mu$-series of a string algebra is essentially determined by the set $\mathrm{Ba}(\Lambda)$ of all bands for $\Lambda$ up to cyclic permutation.

The notions of a string and a band can be defined for any zero-relation algebra, i.e., a bound quiver algebra $\Lambda=\K Q/I$ for which $I$ is generated by paths. Motivated by variants of the zeta function, we introduce a complex-valued \emph{zeta function for a zero-relation algebra} $\Lambda$, defined as $\zeta_\Lambda(t) := \prod\limits_{\B \in \Ba(\Lambda) } (1 - t^{|\B|})^{-1}$ for small $|t|$, where $|\B|$ is the length of $\B$. In the language of zeta functions, bands correspond to primes, and we define the \emph{band counting function} $\pi_\Lambda(n):= |\{ \B \in \mathrm{Ba}(\Lambda) : |\B| = n \}|$ analogous to the graph prime counting function \cite[Definition~2.11]{Terras_2010}.

To a zero-relation algebra $\Lambda$ whose defining relations have maximal length $N+1$, we associate its \emph{state graph} $G_\Lambda$, which is a directed graph whose vertices are the strings of length $N$, and whose arrows are the strings of length $N+1$. This construction is akin to the construction of the graph representation of a shift of finite type in symbolic dynamics \cite[\S~2.3]{symbolic_dynamics}. Another similar construction of graphs with certain words appeared in the study of the growth of Hilbert series of associative algebras by Ufnarovskii \cite[\S~2]{Ufnarovski1991Graphs}. Cyclic permutations of powers of bands for $\Lambda$ are in a bijective correspondence with cycles in $G_\Lambda$ (see Theorem \ref{thm:correspondence}). The Ihara zeta function for undirected graphs \cite{Sunada} and its generalisation to directed graphs \cite{KotaniSunada} (also see \cite{Horton2007IharaDigraphs,Terras_2010}) are also defined as products indexed by cycles. Inspired by the fact that the Ihara zeta function can be viewed as a special case of the Ruelle zeta function \cite[Equations~(4.5)--(4.7)]{Terras_2010}, we show that the zeta function $\zeta_\Lambda(t)$ admits a compact formula in terms of the adjacency matrix $A_\Lambda$ of $G_\Lambda$ (Theorem~\ref{thm: compact-form_of_zeta}).

The main contributions of the paper are based on the beautiful interplay between the zeta function and the $\mu$-series of a string algebra; these results also extend to special biserial algebras (see \S~\ref{sec 7}). Analogous to the the graph prime number theorem \cite[Theorem~10.1]{Terras_2010}, we prove a \emph{prime number theorem} for string algebras as stated below. (Say $f\sim g$ for functions $f,g:\mathbb{N}\to\mathbb{C}$ if $\lim\limits_{m\to\infty} \frac{f(m)}{g(m)}=1$.)

\begin{introthm}\label{thm:sapnt}
For a string algebra $\Lambda$, there are positive integers $C_\Lambda, L_\Lambda$ such that $$\pi_\Lambda(mL_\Lambda)\sim C_\Lambda \frac{R(A_\Lambda)^{mL_\Lambda}}{mL_\Lambda},$$ where $R(A_\Lambda)$ is the spectral radius of $A_\Lambda$.  
\end{introthm}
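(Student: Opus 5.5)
We follow the standard route to the graph prime number theorem, with the state graph $G_\Lambda$ playing the role of the graph. The first step uses the correspondence of Theorem~\ref{thm:correspondence} between cyclic permutations of powers of bands and cycles in $G_\Lambda$. Counting closed walks of length $n$ in $G_\Lambda$ gives $\operatorname{tr}(A_\Lambda^n)$. Each closed walk corresponds to a cyclic rotation of $\B^{n/d}$ for a band $\B$ with $|\B|=d\mid n$, and there are $d$ rotations. Hence
\begin{equation*}
N_n:=\operatorname{tr}(A_\Lambda^n)=\sum_{d\mid n} d\,\pi_\Lambda(d),
\end{equation*}
up to a constant factor that depends only on how the paper normalises bands with respect to inversion and on how strings of length $n$ relate to walks of length $n$ in $G_\Lambda$. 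This identity is also the logarithmic derivative form of the compact formula $\zeta_\Lambda(t)=\det(I-tA_\Lambda)^{-1}$ in Theorem~\ref{thm: compact-form_of_zeta}, which I would take as the starting point. Möbius inversion then gives $n\pi_\Lambda(n)=\sum_{d\mid n}\mu(n/d)N_d$.

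Next, I would analyse $N_n$ via Perron--Frobenius. Write $R=R(A_\Lambda)$. If $R\le 1$, the statement must be read with $C_\Lambda$ adapted, or $\pi_\Lambda$ is eventually $0$ or bounded. Domestic algebras have finitely many bands, so this case has to be handled separately or excluded, and I would note it. Assume $R>1$, i.e.\ $\Lambda$ is non-domestic. Decompose $G_\Lambda$ into strongly connected components. The eigenvalues of $A_\Lambda$ are the union of those of the irreducible diagonal blocks. By Perron--Frobenius, each irreducible block with spectral radius $R$ and period $h_i$ has as its eigenvalues of modulus $R$ exactly $R\omega$ for the $h_i$-th roots of unity $\omega$, each simple within that block. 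All other eigenvalues have modulus $<R$ or belong to blocks of smaller radius.

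Set $L_\Lambda=\operatorname{lcm}$ of the periods $h_i$ of the maximal blocks. For $n=mL_\Lambda$ this gives $\sum_\omega\omega^n=h_i$ for each maximal block. Hence $N_{mL_\Lambda}=C_\Lambda R^{mL_\Lambda}+O(r^{mL_\Lambda})$ with $r<R$ and $C_\Lambda=\sum_i h_i$, a positive integer.

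The Möbius step then finishes the argument:
\begin{equation*}
\Bigl|n\pi_\Lambda(n)-N_n\Bigr|\le\sum_{d\mid n,\,d\le n/2}|N_d|\le n\cdot \dim(A_\Lambda)R^{n/2}.
\end{equation*}
This is $o(R^n)$ because $R>1$. Dividing by $n=mL_\Lambda$ gives the asymptotic.

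The main obstacle is the second step, namely ensuring that the dominant eigenvalues are semisimple in the full matrix. Two distinct strongly connected components, both with spectral radius $R$, joined by a path would produce Jordan blocks, giving $N_n\sim cn^kR^n$ and breaking the asymptotic form. Here I expect to use the string-algebra structure. Two band-carrying components linked by a walk would let one concatenate bands in a way that forces non-domesticity of a stronger kind. More precisely, the claim I would try to prove first is the following: in $G_\Lambda$, no two distinct cycle-containing components of maximal radius are comparable in the reachability order. Failing that, I would restrict $N_n$ to the trace. The trace of $A^n$ for a block upper-triangular matrix only sees the diagonal blocks, so Jordan coupling between blocks does not affect $\operatorname{tr}(A_\Lambda^n)$. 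This observation in fact resolves the obstacle: $N_n=\sum_i\operatorname{tr}(B_i^n)$ over the irreducible diagonal blocks $B_i$. So I would present it this way, with each block treated by the irreducible Perron--Frobenius theorem.
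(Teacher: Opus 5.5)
Your proposal is correct and follows essentially the same route as the paper. The common steps are $N_n=\mathrm{Tr}(A_\Lambda^n)=\sum_{d\mid n}d\,\pi_\Lambda(d)$ via the state-graph correspondence, M\"obius inversion, and Perron--Frobenius applied block by block to the strongly connected components, with $L_\Lambda$ an lcm of periods and $C_\Lambda$ the sum of the periods of the components of maximal spectral radius. Your remark that the trace only sees the diagonal blocks is exactly why the paper can use $\mathrm{Spec}\,A_\Lambda=\bigsqcup_i\mathrm{Spec}\,A_i$, so the Jordan-coupling worry never arises.

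Your explicit bound $n\dim(A_\Lambda)R^{n/2}$ on the proper-divisor terms supplies the estimate the paper leaves implicit in Corollary~\ref{cor: gtpnt}(2). Your restriction to $R>1$ is also genuinely necessary. For a domestic algebra with at least one band, $R(A_\Lambda)=1$ and $\pi_\Lambda$ is eventually zero, so the stated asymptotic fails there. The paper only ever applies the theorem in the non-domestic case.
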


Given a non-domestic string algebra $\Lambda$, there are distinct bands $\B_1,\B_2$ such that both $\B_1'\B_2'$ and $\B_2'\B_1'$ are strings for some cyclic permutations $\B_i'$ of $\B_i$. This observation hints at $\Lambda$ being of exponential growth--indeed, we obtain this statement as a consequence of the above theorem.

\begin{introcor}\label{cor: expgr}
A non-domestic string algebra is of exponential growth.
\end{introcor}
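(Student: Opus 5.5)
We deduce the corollary from Theorem~\ref{thm:sapnt} together with the Butler--Ringel description of $\mu_\Lambda$. There are three steps: bound $\mu_\Lambda$ below by band counts, show $R(A_\Lambda)>1$ when $\Lambda$ is non-domestic, and compare the resulting exponential lower bound with any polynomial.

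\textbf{Step 1: lower bound for $\mu_\Lambda$.} By the Butler--Ringel classification, the one-parameter families of indecomposable $m$-dimensional $\Lambda$-modules are the band modules $M(\B,n,\lambda)$ with $n|\B|=m$, one family for each pair $(\B,n)$ with $\B\in\Ba(\Lambda)$ taken up to inversion as well as cyclic permutation. Hence $\mu_\Lambda(m)\ge \pi_\Lambda(m)/2$; I expect $\mu_\Lambda(m)=\sum_{d\mid m}\pi_\Lambda(d)$ up to this factor of $2$. It therefore suffices to show that $\pi_\Lambda(mL_\Lambda)$ grows faster than every polynomial in $mL_\Lambda$.

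\textbf{Step 2: $R(A_\Lambda)>1$.} This is the main obstacle. Since $\Lambda$ is non-domestic, the observation after Theorem~\ref{thm:sapnt} gives distinct bands $\B_1,\B_2$ with cyclic permutations $\B_1',\B_2'$ such that both $\B_1'\B_2'$ and $\B_2'\B_1'$ are strings. By Theorem~\ref{thm:correspondence}, powers of $\B_1'$ and $\B_2'$ correspond to closed walks in $G_\Lambda$. The concatenation conditions should mean that these two closed walks pass through a common vertex: after replacing $\B_i'$ by a suitable power so that both lengths exceed $N$, they share a common string of length $N$. I would verify this by checking that the length-$N$ window at the junction of $\B_1'\B_2'$ lies on both cycles.

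Once two distinct cycles $c_1,c_2$ share a vertex $v$, every word in $\{c_1,c_2\}$ is a closed walk at $v$. These words are distinct as walks, because $c_1,c_2$ do not commute: distinct bands are not powers of a common word. So the number of closed walks of length $k\ell$ is at least $2^k$, where $\ell$ is a common multiple of the two cycle lengths. Since $\operatorname{tr}(A_\Lambda^{k\ell})\le n R(A_\Lambda)^{k\ell}$ up to polynomial factors, with $n$ the number of vertices, this forces $R(A_\Lambda)\ge 2^{1/\ell}>1$. If Theorem~\ref{thm:sapnt}'s proof already yields $C_\Lambda>0$ together with $R(A_\Lambda)>1$ in this case, I would simply cite it.

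\textbf{Step 3: conclusion.} By Theorem~\ref{thm:sapnt}, for large $m$ we have $\pi_\Lambda(mL_\Lambda)\ge \tfrac{C_\Lambda}{2}R^{mL_\Lambda}/(mL_\Lambda)$ with $R=R(A_\Lambda)>1$. Given $n\ge1$, the exponential $R^{M}$ eventually dominates $2M^{n+1}/C_\Lambda$ along $M=mL_\Lambda$. Combined with Step 1, this gives $\mu_\Lambda(M)>M^n$ for some $M$, which is exactly exponential growth.
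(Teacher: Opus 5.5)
Your overall plan is the paper's: bound $\mu_\Lambda$ below by $\pi_\Lambda$, establish $R(A_\Lambda)>1$, and feed this into Theorem~\ref{thm:sapnt}. Step~1 is fine: $\mu_\Lambda(m)\ge\frac12\pi_\Lambda(m)$ is immediate from Proposition~\ref{prop: mu-pi}, and it is lighter than the paper's appeal to Proposition~\ref{prop:mu_pi_asymptotic}. Step~3 is also fine.

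\textbf{The gap is in Step~2.} The closed walks of $\B_1'$ and $\B_2'$ need not share a vertex, and the junction window does not lie on both of them. Passing to powers changes nothing: the vertices visited by the walk of $\B_i'^k$ are exactly the length-$N$ factors of the periodic word $\B_i'\B_i'\B_i'\cdots$, whatever $k$ is. A window straddling the junction of $\B_1'\B_2'$ is in general a factor of at most one of these two periodic words.

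For instance, take $\mathrm{GP}_{2,5}$: one vertex, loops $a,b$, $\rho=\{a^2,b^5,ab,ba\}$, so $N=4$. Here $\B_1=bA$ and $\B_2=b^2A$ are distinct bands, and $\B_1\B_2$ and $\B_2\B_1$ are strings. But the length-$4$ factors of $bAbA\cdots$ are $bAbA, AbAb$, while those of $b^2Ab^2A\cdots$ are $b^2Ab, bAb^2, Ab^2A$. So the two cycles are vertex-disjoint. By Fine--Wilf this happens for any two distinct bands once $N\ge|\B_1|+|\B_2|$. Hence ``every word in $\{c_1,c_2\}$ is a closed walk at $v$'' is unjustified, and so is the bound $2^k$.

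\textbf{How to repair it.} The counting idea survives if you count strings rather than walks based at one vertex. Put $X=\B_1'^{\ell/|\B_1|}$ and $Y=\B_2'^{\ell/|\B_2|}$. Relations are paths and bands are mixed. So in a word over $\{\B_1',\B_2'\}$, every maximal direct or inverse run, and every pair of adjacent syllables, lies inside two consecutive letters. That is, it lies inside one of the strings $\B_1'^2$, $\B_1'\B_2'$, $\B_2'\B_1'$, $\B_2'^2$.

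Hence each word of length $k$ in $X,Y$ is a mixed cyclic string all of whose powers and cyclic permutations are strings. By Remark~\ref{rem:cycperm-iff} it is therefore counted by $N_{k\ell}$. Since $X\neq Y$ (distinct primitive words have no common power), you get
$2^k\le N_{k\ell}=\mathrm{Tr}(A_\Lambda^{k\ell})\le|\mathrm{Spec}\,A_\Lambda|\,R(A_\Lambda)^{k\ell}$, and so $R(A_\Lambda)\ge 2^{1/\ell}>1$.

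\textbf{How the paper does it.} The paper does not use the band-pair observation from the introduction, which it states without proof. It takes $R(A_\Lambda)>1$ from Proposition~\ref{cor: domestic_for_leq1}, i.e.\ from the known equivalence of domesticity with finiteness of $\mathrm{Ba}(\Lambda)$. In effect: if $R(A_\Lambda)\le1$, then $\pi_\Lambda(m)\le N_m/m\le|\mathrm{Spec}\,A_\Lambda|/m$, so there are only finitely many bands. Your fallback of citing the proof of Theorem~\ref{thm:sapnt} would not work: that proof does not itself deliver $R(A_\Lambda)>1$, and Proposition~\ref{cor: domestic_for_leq1} is what you should cite.
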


Our final main result connects the domesticity of a string algebra with the rationality of its $\mu$-series.

\begin{introthm}\label{main}
A string algebra $\Lambda$ is domestic if and only if $\bar{\mu}_\Lambda(t)$ is rational.  
\end{introthm}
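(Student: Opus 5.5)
We have to prove Theorem~\ref{main}: a string algebra $\Lambda$ is domestic if and only if $\bar\mu_\Lambda(t)$ is rational. The plan is to first express $\mu_\Lambda(m)$ through the band counting function. By the Butler--Ringel classification, the one-parameter families of $m$-dimensional modules are the band modules $M(\B,n,\lambda)$, $\lambda\in\K^\times$, with $n|\B|=m$. Assuming $\mu_\Lambda(m)$ counts these families up to the identification of a band with its inverse and cyclic permutations (the bookkeeping fixed in $\Ba(\Lambda)$), we get
\[ \mu_\Lambda(m)=\sum_{d\mid m}\pi_\Lambda(d). \]
Equivalently,
\[ \bar\mu_\Lambda(t)=\sum_{\B\in\Ba(\Lambda)}\frac{t^{|\B|}}{1-t^{|\B|}}. \]

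\emph{Domestic implies rational.} If $\Lambda$ is domestic, $\Ba(\Lambda)$ is finite; this is the standard characterisation of domestic string algebras, and it also follows since $\mu_\Lambda(m)\ge\pi_\Lambda(m)$. Then $\bar\mu_\Lambda(t)$ is a finite sum of rational functions, hence rational.

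\emph{Rational implies domestic.} Suppose $\Lambda$ is non-domestic, so $\Ba(\Lambda)$ is infinite, and assume $\bar\mu_\Lambda$ is rational. The coefficients are nonnegative integers, so a rational generating function with integer coefficients has coefficients satisfying a linear recurrence. The growth of such a sequence is of the form $\sum_j P_j(m)\alpha_j^m$ for finitely many algebraic $\alpha_j$ and polynomials $P_j$. Theorem~\ref{thm:sapnt} together with Corollary~\ref{cor: expgr} gives $R:=R(A_\Lambda)>1$, since the count of bands grows exponentially. From $\pi_\Lambda(mL)\sim C R^{mL}/(mL)$ and
\[ \mu_\Lambda(n)=\pi_\Lambda(n)+O\!\left(n R^{n/2}\right), \]
the divisors $d\le n/2$ contributing little, we get along the progression $n=mL_\Lambda$:
\[ \mu_\Lambda(mL)\sim C\,\frac{R^{mL}}{mL}. \]
Restricting to $n\equiv 0\pmod L$ preserves rationality, since the section of a rational series along an arithmetic progression is rational. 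So we obtain a linear recurrence sequence $a_m=\mu_\Lambda(mL)$ with $a_m\sim C\rho^m/m$, where $\rho=R^L$.

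The main step is to rule this out. By partial fractions, $a_m=\sum_j P_j(m)\alpha_j^m$. Consider the terms with $|\alpha_j|$ maximal, say equal to $\rho'$, and with maximal polynomial degree $k$ among them. Then $a_m/(m^k\rho'^m)$ is asymptotically a nonzero almost-periodic sum $\sum c_j e^{i m\theta_j}$, and such a sum does not tend to $0$ (its mean square is $\sum|c_j|^2>0$). Comparing with $a_m\sim C\rho^m m^{-1}$ forces $\rho'=\rho$, and then $m^k\asymp m^{-1}$ along a positive-density subsequence. That is impossible since $k\ge0$. So $\bar\mu_\Lambda$ is not rational, which gives the contrapositive.

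An alternative route for this step is to use the formula $\zeta_\Lambda(t)=\prod(1-t^{|\B|})^{-1}$ together with Theorem~\ref{thm: compact-form_of_zeta}: $t\frac{d}{dt}\log\zeta_\Lambda$ is rational, and its coefficients are $\sum_{d\mid m}d\,\pi_\Lambda(d)$. This does not directly yield $\mu$, so I would rely on the asymptotic argument.

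I expect the hardest part to be pinning down the exact relation between $\mu_\Lambda(m)$ and $\pi_\Lambda$, in particular the inverse-band identification and the minimality in the definition of $\mu$. Only the asymptotic $\mu_\Lambda(mL)\asymp R^{mL}/m$ is really needed, so a constant-factor discrepancy there is harmless.
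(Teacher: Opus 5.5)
Your proof is correct. The forward direction is the paper's, but your converse takes a different route. On the normalisation you hedged about: $\mathrm{Ba}(\Lambda)$ identifies bands only up to cyclic permutation, so $\B$ and $\B^{-1}$ give distinct classes, and Proposition~\ref{prop: mu-pi} reads $\mu_\Lambda(m)=\frac12\sum_{d\mid m}\pi_\Lambda(d)$. As you say, the factor is immaterial.

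\emph{The paper's converse} is analytic. Via Proposition~\ref{prop:mu-N_m_relation} it writes $\mu(m)=\frac{N_m}{2m}+\mu_2(m)$. The first part sums to $\sum_m\frac{N_m}{2m}t^m=-\frac12\sum_{\lambda\in\mathrm{Spec}\,A_\Lambda}\log(1-\lambda t)=\frac12\log\zeta_\Lambda(t)$. This has a logarithmic singularity at $t=1/R$, because $R$ is itself an eigenvalue. Bounding $\mu_2(m)$ shows that $\bar\mu_2$ has radius of convergence at least $R^{-1/2}>R^{-1}$. Hence $\bar\mu$ cannot be meromorphic near $1/R$. So the zeta-function route you set aside is essentially the paper's: $\bar\mu_\Lambda$ and $\frac12\log\zeta_\Lambda$ share the dominant $d=m$ term and differ by a function holomorphic on a larger disc.

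\emph{Your converse} works on the coefficient side. You use the asymptotic $\mu(mL)\sim\frac{C}{2}R^{mL}/(mL)$ from Theorem~\ref{thm:sapnt}, rationality of the $L$-section, and the exponential-polynomial form of a linear recurrence together with the mean-square nonvanishing of $\sum_j c_je^{im\theta_j}$. Your case analysis of $\rho'$ versus $\rho$ is sound. The $\rho^m/m$ behaviour you rule out is exactly the coefficient signature of the logarithmic singularity the paper exhibits.

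\emph{Trade-offs.} The paper's version needs less input. It uses only $R\in\mathrm{Spec}\,A_\Lambda$, with no cyclic structure of the peripheral spectrum, no passage to multiples of $L_\Lambda$, and no positivity of $C_\Lambda$. Yours is purely real-variable and applies to any sequence with such an asymptotic along an arithmetic progression.

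\emph{Two small points.} First, $R>1$ for non-domestic $\Lambda$ is simply Proposition~\ref{cor: domestic_for_leq1}; Theorem~\ref{thm:sapnt} only carries content in that regime, which is where you use it. Second, $\mu_\Lambda(m)\ge\frac12\pi_\Lambda(m)$ by itself gives only boundedness of $\pi_\Lambda$, not finiteness of $\mathrm{Ba}(\Lambda)$. For finiteness, evaluate $\mu_\Lambda$ at the lcm of the lengths of $n$ distinct classes in $\mathrm{Ba}(\Lambda)$, which gives a value at least $n/2$.
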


The rest of the paper is organised as follows. In \S~\ref{sec: 2}, we recall some results from spectral graph theory and number theory. In the context of zero-relation algebras, we define strings and bands in \S~\ref{sec: 3}, and then describe the construction of their state graphs followed by the properties of their zeta functions in \S~\ref{sec: 4}. Focusing on string algebras, we prove Theorem \ref{thm:sapnt} and Corollary \ref{cor: expgr} in \S~\ref{sec: 5} while the proof of Theorem \ref{main} occupies \S~\ref{sec: 6}. Finally, we discuss the extension of our results from string algebras to special biserial algebras in \S~\ref{sec 7}.

\subsection*{Acknowledgement} AK thanks Hemant Bhate for a lecture series as well as discussions on Ihara zeta functions.

\section{Preliminaries}\label{sec: 2}

Recall that a \emph{directed graph} is a pair $G=(V,E)$, where $V$ is a finite set, called the vertex set, and $E$ is an irreflexive binary relation on $V$, called the arrow set. Given an indexing $v_1,v_2,\cdots,v_n$ of $V$, the \emph{adjacency matrix} of $G$ is an $(n\times n)$-matrix $A_G=(a_{ij})_{i,j=1}^{n}$ where $a_{ij}$ is the number of arrows from $v_i$ to $v_j$ in $G$. A \emph{directed acyclic graph} (DAG) is a directed graph with no directed cycles. A \emph{strongly connected component} of $G$ is a maximal induced subgraph of $G$ in which there is a directed path between any two vertices.

\begin{rem}
Throughout the paper, the term \emph{subgraph} includes non-induced subgraphs.
\end{rem}

\begin{rem}
The results in this section are also valid when directed graphs are replaced by quivers, i.e., when loops and parallel arrows are allowed.
\end{rem}

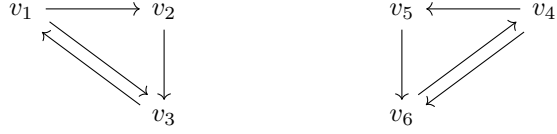
\begin{figure}[h]
\begin{center}
    \begin{tikzcd}[row sep=large, column sep=large]
        v_1 & v_2 & & v_5 & v_4 \\
            & v_3 & & v_6 &
        \arrow[from=1-1, to=1-2]
        \arrow[from=1-2, to=2-2]
        \arrow[shift left=1, from=1-1, to=2-2]
        \arrow[shift left=1, from=2-2, to=1-1]
        \arrow[from=1-5, to=1-4]
        \arrow[from=1-4, to=2-4]
        \arrow[shift left=1, from=1-5, to=2-4]
        \arrow[shift left=1, from=2-4, to=1-5]
    \end{tikzcd}
\end{center}
\caption{An example of a directed graph}
    \label{fig: directed graph ex}
\end{figure}

\begin{exmp}\label{ex: prelim, running 1}
If $G$ is the directed graph in Figure \ref{fig: directed graph ex}, then $A_G=$
$\begin{pmatrix}
M & 0\\
0 & M
\end{pmatrix}$,
where $M=\begin{pmatrix}
    0 & 1 & 1\\
    0 & 0 & 1\\
    1 & 0 & 0
\end{pmatrix}$. Note that $G$ has two strongly connected components, with vertex sets $\{v_1,v_2,v_3\}$ and $\{v_4,v_5,v_6\}$, each with adjacency matrix $M$. We have $\det(I-tA_G)=1-2t^2-2t^3+t^4+2t^5+t^6$.
\end{exmp}



\begin{rem}
For an $(n\times n)$-matrix $A$ with characteristic polynomial $\chi_A$, we have $\det(I-tA)=t^n\chi_A(t^{-1})$.
\end{rem}

For a matrix $A$, let $\mathrm{Spec}\, A$ denote its spectrum (multiset of eigenvalues with multiplicity) and $R(A)$ denote its spectral radius (maximum absolute value of an eigenvalue of $A$).

\begin{defn}
An $(n\times n)$ real matrix $A$ is said to be \emph{irreducible} if there does not exist a permutation matrix $P$ such that $A=P^t \begin{pmatrix}
    B & C\\
    0 & D
\end{pmatrix}P$, where $P^t$ denotes the transpose of $P$ and $B$ is an $r\times r$ matrix with $1\le r<n$.
\end{defn}

\begin{thm}[Perron and Frobenius]\label{thm: perron}\cite[Theorem~8.4.4, Corollary~8.4.6]{Horn_Johnson}\label{PF}
    Let $A$ be an irreducible $(n \times n)$ real matrix with non-negative entries and $n\geq2$. Then 
    \begin{enumerate}
        \item the spectral radius $R(A)$ is positive; and
        \item if $S(A)= \{ \lambda_1, \cdots, \lambda_k\}$ is the set of eigenvalues of $A$ having maximum modulus, then $$S(A) = \left\{ R(A) e^{\frac{2 \pi i a}{k}} : 1 \leq a \leq k \right\},$$ and each eigenvalue in $S(A)$ has algebraic multiplicity $1$.
    \end{enumerate}
\end{thm}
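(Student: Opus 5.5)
This is the classical Perron--Frobenius theorem, quoted from Horn and Johnson, so the paper cites it rather than proving it. If I were to prove it myself, I would pass to the directed graph of $A$. Irreducibility means this graph is strongly connected. Since $A\ge 0$, the matrix $(I+A)^{n-1}$ is then entrywise positive. The plan is to prove everything for positive matrices first and transfer it to $A$.

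\textbf{Part (1).} I would use the Collatz--Wielandt function
\[
r(x)=\min_{x_i>0}\frac{(Ax)_i}{x_i}
\]
on the simplex of non-negative vectors. First I show that $r$ attains a maximum $r^*$; the standard route is to restrict to the compact image of the simplex under $(I+A)^{n-1}$. Then I show that any maximiser $z$ is a positive eigenvector with $Az=r^*z$. The argument is by contradiction: if $Az-r^*z\ge0$ were nonzero, applying $(I+A)^{n-1}$ would strictly increase $r$. Positivity of $r^*$ follows because strong connectivity with $n\ge2$ gives every row a nonzero entry, so $r(\mathbf 1)>0$. Next I show every eigenvalue satisfies $|\lambda|\le r^*$. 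If $Ax=\lambda x$, then $A|x|\ge|\lambda||x|$, so $|\lambda|\le r(|x|)\le r^*$. Hence $R(A)=r^*>0$.

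\textbf{Algebraic multiplicity $1$.} First, geometric simplicity of $R(A)$: any real eigenvector can be combined with the positive eigenvector $z$ to produce a non-negative eigenvector with a zero entry, and irreducibility forbids such a vector. Second, no Jordan block: a generalized eigenvector $w$ with $(A-R)w=z$ is impossible, as seen by pairing with the positive left Perron vector $y$, which gives $0=y^t(A-R)w=y^tz>0$.

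\textbf{Part (2).} This is the main obstacle. Suppose $|\lambda|=R$ with $Ax=\lambda x$. Then the inequality $A|x|\ge R|x|$ must be an equality, so $|x|$ is a multiple of $z$. Write $x=D|x|$, where $D$ is a diagonal matrix of unimodular phases. I would then show that $A=\tfrac{\lambda}{R}DAD^{-1}$. This requires equality in the triangle inequality along each row, and that step needs the positivity of $z$.

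From this similarity I read off the consequences:
\begin{itemize}
\item The set $S(A)$ is invariant under multiplication by $\lambda/R$.
\item The ratios $\lambda/R$ form a finite subgroup of the unit circle, hence the $k$-th roots of unity.
\item Each element of $S(A)$ has the same multiplicity as $R(A)$, namely $1$.
\end{itemize}

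The delicate points are the phase bookkeeping in the triangle-inequality equality case and the group-closure argument.
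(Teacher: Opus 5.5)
Your sketch is correct. The paper gives no proof of this statement and simply imports it from Horn--Johnson, so the meaningful comparison is with that source.

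\textbf{The Horn--Johnson route.} Their argument is modular:
\begin{enumerate}
\item Perron's theorem is first proved for \emph{positive} matrices.
\item A nonnegative eigenvector for $R(A)$ is obtained for arbitrary nonnegative $A$ by approximating with positive matrices.
\item $(I+A)^{n-1}>0$ is then used to show this vector is positive, and to pull algebraic simplicity back from the Perron root $(1+R(A))^{n-1}$ of $(I+A)^{n-1}$.
\item The peripheral spectrum is read off from Wielandt's theorem with $B=A$: if $|B|\le A$ and $e^{i\varphi}R(A)\in\mathrm{Spec}\,B$, then $B=e^{i\varphi}DAD^{-1}$.
\end{enumerate}

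\textbf{Your route.} Your opening sentence announces this positive-first strategy, but your Part~(1) actually does something more direct. You run the Collatz--Wielandt maximisation on $A$ itself, using $(I+A)^{n-1}$ only for compactness and for the strict-increase step. You then obtain algebraic simplicity from geometric simplicity plus the positive left Perron vector, rather than by transfer. This gives a shorter, self-contained proof with no limiting argument. The Horn--Johnson route, in exchange, also delivers the reducible nonnegative case and the comparison $R(B)\le R(A)$ for $|B|\le A$ along the way.

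Your Part~(2) coincides with theirs. Two points should be made explicit there:
\begin{itemize}
\item The $R$-eigenspace of the real matrix $A$ is spanned by real vectors, so treating real eigenvectors suffices.
\item The similarity $A\sim\frac{\lambda}{R}A$ gives $\mathrm{Spec}\,A=\frac{\lambda}{R}\,\mathrm{Spec}\,A$ as multisets. This yields closure of the ratio set under multiplication (finiteness then supplies inverses) and also the equality of multiplicities.
\end{itemize}
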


\begin{rem} \label{rem: scc-irr}
    If a directed graph is strongly connected then its adjacency matrix is irreducible.
\end{rem}

It is possible to compute the polynomial $\det(I-tA_G)$ using strongly connected components of $G$. We begin with a standard fact about DAGs.
\begin{rem}\label{rem: topo_order}
    There exists a topological ordering of the vertices of a DAG, i.e., there is an indexing $v_1,\cdots, v_n$ of vertices in a DAG such that whenever there is an arrow from vertex $v_i$ to vertex $v_j$, we have $i<j$.
\end{rem}
\begin{prop}\label{prop: det_strong_conn}
    Suppose $G$ is a directed graph with strongly connected components $G_1,\cdots, G_k$. Then $$\det(I-tA_G) = \prod_{i =1}^k \det(I-tA_{G_i}).$$
\end{prop}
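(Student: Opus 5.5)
The plan is to arrange the vertices of $G$ so that $A_G$ becomes block upper triangular, with the adjacency matrices of the strongly connected components on the diagonal, and then use the fact that the determinant of a block triangular matrix is the product of the determinants of its diagonal blocks.

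First, form the condensation $\widehat G$ of $G$. Its vertices are the components $G_1,\dots,G_k$, and there is an arrow $G_i\to G_j$ for $i\ne j$ whenever some arrow of $G$ goes from a vertex of $G_i$ to a vertex of $G_j$. This condensation is a DAG. Indeed, a directed cycle $G_{i_1}\to\cdots\to G_{i_r}\to G_{i_1}$ with $r\ge 2$ would give directed paths in $G$ joining every vertex of $G_{i_1}\cup\cdots\cup G_{i_r}$ to every other. The induced subgraph on that union would then be strongly connected, contradicting the maximality of the $G_i$.

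Second, apply Remark~\ref{rem: topo_order} to $\widehat G$. After relabelling, every arrow of $G$ between distinct components goes from $G_i$ to $G_j$ with $i<j$. Now index the vertices of $G$ by listing the vertices of $G_1$ first, then those of $G_2$, and so on. This reindexing conjugates $A_G$ by a permutation matrix $P$. Since each vertex lies in exactly one component, the resulting matrix $PA_GP^t$ is block upper triangular. Its diagonal blocks are the adjacency matrices $A_{G_i}$, because each $G_i$ is an induced subgraph. Each block below the diagonal is zero, because no arrow goes from $G_j$ to $G_i$ with $j>i$.

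Third, $I-tPA_GP^t$ is block upper triangular with diagonal blocks $I-tA_{G_i}$. Moreover $\det(I-tA_G)=\det\big(P(I-tA_G)P^t\big)$, since $\det P\det P^t=1$. The determinant of a block triangular matrix is the product of the determinants of its diagonal blocks, and this gives the formula.

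The only step needing care is the first one: that the condensation is acyclic and that the components partition the vertex set. A component may be a single vertex with no loop, in which case its block is the $1\times 1$ zero matrix and contributes the factor $1$. This is consistent with the claim. The argument does not use the absence of loops or parallel arrows, so it also applies to quivers.
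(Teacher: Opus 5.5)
Your proposal is correct and follows the same route as the paper: form the condensation DAG, relabel vertices by a topological ordering so that $A_G$ becomes block upper triangular with the $A_{G_i}$ on the diagonal, and take the product of the diagonal block determinants. You also justify why the condensation is acyclic (via maximality of the components), which the paper only asserts as holding by construction.
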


\begin{proof}
For $1\leq i \leq k$, let $V_i$ be the vertex set of $G_i$. Define the \emph{condensation graph} $G'$ of $G$ with vertex set $\{G_i:1\le i\le k\}$, where an arrow exists from $G_i$ to $G_j$ if there is an arrow from a vertex in $V_i$ to a vertex in $V_j$ in $G$. By construction, $G'$ is a DAG. In view of Remark \ref{rem: topo_order}, we can relabel the vertices of $G'$ using a topological ordering, and hence there is a relabeling of the vertices of $G$ such that for any $i < j$, every vertex in $V_i$ has a smaller index than every vertex in $V_j$, while preserving the relative ordering of vertices within each $V_i$. Let $P$ be the permutation matrix associated with this relabeling, and let $A_G' = P^{-1}A_GP$ be the adjacency matrix of the relabeled graph. Since the determinant is invariant under similarity, we have
    \[
        \det(I - tA'_G) = \det(I - t P^{-1}A_G P) = \det(P^{-1}(I - tA_G)P) = \det(I - tA_G).
    \]
As a consequence of the topological ordering, $A_G'$ is an upper triangular block matrix with the adjacency matrices $A_{G_i}$ on the diagonal. This completes the proof.
\end{proof}

\begin{exmp}\label{exmp:using-SCC}
Continuing from Example \ref{ex: prelim, running 1}, note that $A_G$ is already an upper triangular block matrix. Moreover, both strongly connected components of $G$ are isomorphic to each other, and hence $\det(I-tA_G)=(\det(I-tM))^2=(1-t^2-t^3)^2$, which matches earlier computations, thus verifying Proposition \ref{prop: det_strong_conn}.
\end{exmp}

We end this section by recalling some standard results from number theory.

The \emph{M\"obius function} $\mu_{\textnormal{M\"{o}b}}:\mathbb{N}\to\mathbb{Z}$ is defined by
$ \mu_{\textnormal{M\"{o}b}}(m):=
\begin{cases}
1,&\text{if $\;m=1$}; \\
(-1)^r   ,&\text{if $\;m=\prod\limits_{i=1}^rp_i$ for distinct primes $p_i$};\\
0,&\text{otherwise}.
\end{cases}$

The \emph{Euler totient function} $\phi: \mathbb N \to \mathbb N$ is defined by $\phi(m)=|\{n\le m:n\text{ is relatively prime to }m\}|$. The following connections between these two functions are well-known.
\begin{rem}\label{rem: tot_prop}
    For $m \geq 1$, we have (1) $\frac{\phi(m)}{m} =\sum\limits_{d \mid m} \frac{\mu_{\textnormal{M\"{o}b}}(d)}{d}$ and (2) $\sum\limits_{d \mid m}\phi(d) = m$.
\end{rem}

\section{Strings and bands for zero-relation algebras}\label{sec: 3}
A \emph{quiver} is a directed graph possibly with parallel arrows as well as loops. More formally, a quiver $Q$ is a tuple $(Q_0,Q_1,s,e)$, where $Q_0$ is the set of vertices, $Q_1$ is the set of arrows, and $s,e:Q_1\to Q_0$ denote the source and end functions. A \emph{bound quiver} is a pair $(Q,I)$ such that $I$ is an admissible ideal of the path algebra $\K Q$, i.e., there exists some $N \geq 1$ such that $R_Q^2 \supseteq I \supseteq R_Q^{N+1}$, where the \emph{arrow ideal} $R_Q$ is the ideal generated by $Q_1$, and the corresponding \emph{bound quiver algebra} is the quotient algebra $\K Q/I$. Say that a bound quiver algebra $\K Q/I$ is a \emph{zero-relation algebra} if $I$ is generated by a set of paths. Let $\rho$ denote a (finite) set of paths that generate $I$ satisfying the property that no path in $\rho$ is a proper subpath of another path in $\rho$. An interested reader could refer to \cite{ASS} for literature on bound quiver algebras. The definitions stated below in the context of zero-relation algebras were originally defined in the context of string algebras by Butler and Ringel \cite{Butler1987AuslanderreitenSW}, but we use the notations from \cite{GKS}.

We use the lower case Roman letter $v$ to denote the vertices and $a,b$ to denote arrows of a quiver. Let us denote by $Q_1^{-1}$ the collection of formal inverses of the arrows in $Q_1$, each denoted by the corresponding upper case Roman letter. For $b\in Q_1$, set $s(B):=e(b)$ and $e(B):=s(b)$. The elements of $Q_1\sqcup Q_1^{-1}$ are called syllables--elements of $Q_1$ (resp. of $Q_1^{-1}$) are called \emph{direct} (resp. \emph{inverse}) syllables--and we denote them using Greek letters $\alpha,\beta,\gamma$ etc. Let $\rho^{-1}$ denote the set of formal inverses of the paths in $\rho$.

\begin{defn}
A \emph{string} of positive length for a zero-relation algebra is a finite sequence $\U=\alpha_n\dots\alpha_1$ of syllables with $n\geq1$, where for each $1\leq i<n$, we have $s(\alpha_{i+1})=e(\alpha_i)$ and $\alpha_{i+1}\ne \alpha_i^{-1}$, and no subsequence $\alpha_{i+j}\dots\alpha_i$ is in $\rho \cup \rho^{-1}$. Say that the \emph{length} of the string $\U$, denoted $|\U|$, is $n$.
\end{defn}

Given a zero-relation algebra $\Lambda$, let $\mathrm{Str}(\Lambda)$ denote the set of strings of positive length for $\Lambda$ and, for $k\geq1$, let $\mathrm{Str}_k(\Lambda)$ denote the set of strings of length $k$ for $\Lambda$. We suppress $\Lambda$ from the notation when the algebra $\Lambda$ is clear from the context.

The \emph{concatenation} of strings $\U=\alpha_n\dots\alpha_1$ and $\V=\beta_m\dots\beta_1$ is the word $\U\V:=\alpha_n\dots\alpha_1\beta_m\dots\beta_1$ provided it is a string. If the concatenation $\U\U$ exists, we write it as $\U^2$; more generally, $\U^n$ is defined similarly for $n\ge 2$ whenever it exists. The \emph{inverse} of $\U$ is $\U^{-1}:=\alpha_1^{-1}\dots\alpha_n^{-1}$. Note that $\U^{-1}$ is also a string. A positive length string is said to be a \emph{direct} (resp. \emph{inverse}) string if it consists only of direct (resp. inverse) syllables; otherwise it is said to be a \emph{mixed} string.

\begin{defn}
A string $\U=\alpha_n\dots\alpha_1$ is \emph{cyclic} if $e(\alpha_n)=s(\alpha_1)$. The string $\U$ is \emph{primitive} if it is cyclic and $\U\neq \V^k$ for every string $\V$ and every integer $k\ge 2$. The string $\U$ is a \emph{band} if it is primitive cyclic, $\alpha_n\in Q_1$, $\alpha_1\in Q_1^{-1}$, and $\U^k$ exists for all positive integers $k$. We use $\B$ to denote a band and $\mathrm{Ba}(\Lambda)$ to denote the collection of all bands for $\Lambda$ up to cyclic permutations of its syllables.
\end{defn}

\begin{rem}\label{rem:cycperm-iff}\cite[Remark~3.1.4]{GKS}
A cyclic string $\U=\alpha_n\dots\alpha_1$ is \emph{permutable} if $\alpha_j\dots\alpha_1\alpha_n\dots\alpha_{j+1}$ is a string for each $1\le j<n$. A mixed cyclic string is a cyclic permutation of a band power if and only if it is permutable.
\end{rem}

\begin{lem}\label{lem: cyc-perm}
If $\B\in \mathrm{Str}(\Lambda)$ is a band, then $\B$ is not a cyclic permutation of $\B^{-1}$.
\end{lem}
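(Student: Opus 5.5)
The plan is to argue by contradiction, turning the hypothesis into a reflection symmetry on the syllable indices of $\B$. That symmetry forces either a syllable equal to its own inverse, or two cyclically adjacent syllables that are mutually inverse. The first is absurd, and the second is forbidden because every power of $\B$ is a string.

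Write $\B=\alpha_n\dots\alpha_1$. Since $\alpha_n\in Q_1$ and $\alpha_1\in Q_1^{-1}$, we have $n\ge 2$. Read indices modulo $n$, with representatives $1,\dots,n$. The inverse is $\B^{-1}=\alpha_1^{-1}\dots\alpha_n^{-1}$, so its $k$-th syllable from the right is $\alpha_{n+1-k}^{-1}$. A cyclic permutation of $\B$ has the form $\alpha_j\dots\alpha_1\alpha_n\dots\alpha_{j+1}$ for some $j$, and its $k$-th syllable from the right is $\alpha_{k+j}$.

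Suppose this cyclic permutation equals $\B^{-1}$. Comparing syllables gives $\alpha_{k+j}=\alpha_{n+1-k}^{-1}$ for all $k$. Setting $m=k+j$ and $c=n+1+j$, this becomes
\[
\alpha_m=\alpha_{c-m}^{-1}\qquad\text{for all } m\in\mathbb Z/n.
\]

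Next I find an index $m$ with either $2m\equiv c$ or $2m\equiv c-1 \pmod n$.
\begin{itemize}
\item If $n$ is odd, $2$ is invertible modulo $n$, so some $m$ satisfies $2m\equiv c$.
\item If $n$ is even, exactly one of $c$ and $c-1$ is even. An even residue $2r$ is hit by $m=r$.
\end{itemize}

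In the first case, $c-m\equiv m$, so $\alpha_m=\alpha_m^{-1}$. This is impossible, since direct and inverse syllables are distinct.

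In the second case, $c-m\equiv m+1$, so $\alpha_{m+1}=\alpha_m^{-1}$. If $1\le m<n$, this contradicts the condition $\alpha_{i+1}\neq\alpha_i^{-1}$ in the definition of a string applied to $\B$ itself. If $m=n$, the pair is $\alpha_1=\alpha_n^{-1}$. Here I use that $\B^2=\alpha_n\dots\alpha_1\alpha_n\dots\alpha_1$ is a string, because a band has all its powers. In $\B^2$ the syllable $\alpha_1$ is immediately followed by $\alpha_n$, so the string condition forbids $\alpha_n=\alpha_1^{-1}$. (In fact this wrap-around case is also excluded directly, since $\alpha_n$ is direct and $\alpha_1$ is inverse, so $\alpha_n=\alpha_1^{-1}$ would force $\alpha_1=\alpha_n^{-1}$ to be direct.)

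Either way we reach a contradiction, so $\B$ is not a cyclic permutation of $\B^{-1}$.

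The only delicate points are the index bookkeeping for the cyclic permutation and the wrap-around adjacency. The wrap-around is handled by the existence of $\B^2$, equivalently by Remark \ref{rem:cycperm-iff}: $\B$ is permutable, so every cyclic permutation is a string, and any cyclically adjacent pair appears adjacently in some string.
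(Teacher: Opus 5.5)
Your argument is correct and is essentially the paper's. The paper splits into two cases: when $\B=\B^{-1}$ it compares end syllables to get $\alpha_n=\alpha_1^{-1}$ and invokes permutability, exactly as your use of $\B^2$ handles the wrap-around pair; for a nontrivial cyclic permutation it finds the centre of the suffix $\alpha_j\dots\alpha_1=\alpha_1^{-1}\dots\alpha_j^{-1}$, and your reflection $m\mapsto c-m$ on $\mathbb{Z}/n$ covers both cases at once. One correction: your parenthetical claim that the wrap-around case is excluded by directions alone is false, because $\alpha_n\in Q_1$ makes $\alpha_n^{-1}$ an inverse syllable, consistent with $\alpha_1\in Q_1^{-1}$, so the appeal to $\B^2$ (or to permutability, as in the paper) is genuinely needed there.
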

\begin{proof}
Let $\B=\alpha_n\dots\alpha_1$. Then $\B^{-1}=\alpha_1^{-1}\dots\alpha_n^{-1}$. Suppose, towards a contradiction, that $\B$ is a cyclic permutation of $\B^{-1}$. If $\B=\B^{-1}$, then by comparing corresponding syllables, we obtain $\alpha_n=\alpha_1^{-1}$, which implies that the cyclic permutation $\alpha_{n-1}\dots\alpha_1\alpha_n=\alpha_{n-1}\dots\alpha_1\alpha_1^{-1}$ of $\B$ is not a string, and hence that $\B$ is not permutable, a contradiction to Remark \ref{rem:cycperm-iff}.

Now suppose $\B=\alpha_{j+1}^{-1}\dots\alpha_n^{-1}\alpha_1^{-1}\dots \alpha_j^{-1}$, that is, $\B$ is a non-trivial cyclic permutation of $\B^{-1}$. Comparing corresponding syllables in the two expressions for $\B$, we obtain $\alpha_j\dots\alpha_1=\alpha_1^{-1}\dots\alpha_j^{-1}=\U$ (say). There are two cases.

If $|\U|=2m+1$ for some $m\ge 0$, then comparing corresponding syllables in the two expressions for $\U$, we obtain $\alpha_{m+1}=\alpha_{m+1}^{-1}$, a clear contradiction.

If $|\U|=2m$ for some $m\ge1$, then comparing corresponding syllables in the two expressions for $\U$, we obtain $\alpha_{m+1}\alpha_m=\alpha_m^{-1}\alpha_{m+1}^{-1}$, implying that $\alpha_{m+1}=\alpha_m^{-1}$. which is again a contradiction. This completes the proof.
\end{proof}

\section{Zeta Functions for zero-relation algebras}\label{sec: 4}

In this section, assume that $\Lambda=\K Q/I$ denotes a zero-relation algebra for a connected quiver $Q$, where $I=\langle\rho\rangle$ for a finite set $\rho$ of paths in $Q$. Let $N+1$ be the length of the longest path in $\rho$.

\begin{defn}\label{defn: ihzeta}
    The \emph{zeta function} of $\Lambda$ is the following function of the complex variable $t$, with $|t|$ sufficiently small:
    $$\zeta_\Lambda(t) := \prod_{\B \in \Ba(\Lambda) } (1 - t^{|\B|})^{-1}.$$
\end{defn}

\begin{rem}\label{rem: convergence_for_small_t}
    Because the zeta function has non-negative coefficients when expanded as a power series in the complex variable $t$, Landau's theorem \cite[p.~237]{apostol1976introduction} implies that both, the series and the product defining $\zeta_\Lambda(t)$, converge absolutely on an open disk of radius $R_\Lambda$ about origin with a singularity at $t=R_\Lambda$. In all computations involving the zeta function, we assume that $|t|<R_\Lambda$.
\end{rem}

We are interested in the coefficients of the power series expansion of the zeta function. A first step in that direction is to construct the \emph{state graph} associated with $\Lambda$.

\begin{defn}\label{defn: state_graph}
    The \emph{state graph} $G_\Lambda := G(\mathrm{Str}_N(\Lambda),\mathrm{Str}_{N+1}(\Lambda))$ of the algebra $\Lambda$ is a directed graph where, for $\U,\V\in\mathrm{Str}_N(\Lambda)$ and $\W\in\mathrm{Str}_{N+1}(\Lambda)$, there is an arrow from $\U$ to $\V$ labeled $\W$ if $\W=\alpha\U=\V\beta$ for some syllables $\alpha$ and $\beta$. For brevity, the adjacency matrix of $G_\Lambda$ will be denoted $A_\Lambda$.
\end{defn}

\begin{rem}
Continuing the notation from Definition \ref{defn: state_graph}, we can also refer to $\alpha$ as the label $\U\to\V$. 
\end{rem}

\begin{rem}
The strongly connected components of $G_\Lambda$ are in a bijective correspondence with  \emph{generalised meta-bands} studied in \cite[\S~5]{SKSK}.
\end{rem}

\begin{exmp}\label{exmp: GP23}
Consider the Gelfand-Ponomarev algebra $\mathrm{GP}_{2,3}$ presented by the data in Figure \ref{fig:quiver_l2}. The longest relation in $I$ is $b^3$ with length $3$,  $$\mathrm{Str}_2(\mathrm{GP}_{2,3}) = \{aB, Ba, Ab, bA, b^2, B^2 \} \text{ and }\mathrm{Str}_3(\mathrm{GP}_{2,3}) = \{ b^2 A, Ab^2, AbA, bAb, aBa, BaB, B^2 a, a B^2 \}.$$ Figure \ref{fig: gp23_state_graph} shows its state graph. 

\end{exmp}

\begin{figure}[ht!]
    \centering
    
    \begin{minipage}[c]{0.45\textwidth}
        \centering
        \[\begin{tikzcd}
            v
            \arrow["b"', from=1-1, to=1-1, loop, in=35, out=325, distance=10mm]
            \arrow["a", from=1-1, to=1-1, loop, in=145, out=215, distance=10mm]
        \end{tikzcd}\]
        \captionsetup{justification=centering} 
        \caption{$\mathrm{GP}_{2,3}$ with \protect\\ $\rho = \{a^2, b^3, ab, ba\}$}
        \label{fig:quiver_l2}
    \end{minipage}%
    \hfill
    \begin{minipage}[c]{0.5\textwidth}
        \centering
        \begin{tikzcd}[row sep=large, column sep=large]
            bA & b^2 & & B^2 & Ba \\
               & Ab  & & aB &
            \arrow[from=1-1, to=1-2, "bbA"]
            \arrow[from=1-2, to=2-2, "Abb"]
            \arrow[shift left=1, from=1-1, to=2-2, "AbA"]
            \arrow[shift left=1, from=2-2, to=1-1, "bAb"]
            \arrow[from=1-5, to=1-4, "BBa"']
            \arrow[from=1-4, to=2-4, "aBB"']
            \arrow[shift left=1, from=1-5, to=2-4, "aBa"]
            \arrow[shift left=1, from=2-4, to=1-5, "BaB"]
        \end{tikzcd}
        \caption{The state graph of $GP_{2,3}$}
        \label{fig: gp23_state_graph}
    \end{minipage}
    
\end{figure}
\begin{rem}
We briefly describe how the set of strings can be understood using the language of symbolic dynamics. For the alphabet $\mathcal{A}\coloneqq Q_1 \cup Q_1^{-1}$, $\mathrm{Str}({\Lambda}) \subseteq \mathcal{A}^{\mathbb Z}$ is a \emph{shift space} \cite[Definition~1.2.1]{symbolic_dynamics}. Since the length of the longest path that generates $I$ is $N+1$, all the strings in $\mathcal{A}^{\mathbb Z} \setminus \mathrm{Str}(\Lambda)$ contain a forbidden $(N+1)$-length substring. Thus, $\mathrm{Str}(\Lambda)$ is an \emph{$(N+1)$-step shift space of finite type} \cite[\S~2.1]{symbolic_dynamics}. Thanks to \cite[Theorem~2.3.2]{symbolic_dynamics}, it is straightforward to observe that $\mathrm{Str}(\Lambda)$ represents the \emph{edge shift space} \cite[Definition~2.2.5]{symbolic_dynamics} of the state graph $G_{\Lambda}$.
\end{rem}
\begin{defn}\label{defn: N_m}
Let $N_m$ be the number of strings in $\mathrm{Str}_m(\Lambda)$ that are cyclic permutations of band powers.
\end{defn}

\begin{thm}\label{thm:correspondence}
If $\mathrm{Tr}(A)$ denotes the trace of a matrix $A$, then $N_m=\mathrm{Tr}(A_\Lambda^m)$ for $m\ge 1$.
\end{thm}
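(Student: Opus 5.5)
The idea is to read $\mathrm{Tr}(A_\Lambda^m)$ as the number of closed walks of length $m$ in $G_\Lambda$ with a marked starting vertex, and to build an explicit bijection between these walks and the strings in $\mathrm{Str}_m(\Lambda)$ that are cyclic permutations of band powers. The bridge between the two sides is the periodic word obtained by repeating a string forever. Throughout I use the convention of Definition~\ref{defn: state_graph}: an arrow $\U\to\V$ with label $\W=\alpha\U=\V\beta$ appends the syllable $\alpha$ on the left and drops the rightmost syllable $\beta$.

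\emph{From walks to strings.} Let $\U_0\to\U_1\to\dots\to\U_m=\U_0$ be a closed walk whose $i$-th arrow appends the syllable $\alpha_i$, and put $\W:=\alpha_m\dots\alpha_1$. Unwinding the walk $k$ times and reading the appended syllables successively to the left of $\U_0$ gives the word $\W^k\U_0$. Every window of length $N+1$ in this word is the label of an arrow of $G_\Lambda$, so it is a string. Being a string is a local condition: non-cancellation involves windows of length $2\le N+1$, and every relation in $\rho\cup\rho^{-1}$ has length at most $N+1$. Hence $\W^k\U_0$ is a string, and so is $\W^k$, for every $k\ge1$. Since $\W^2$ exists, $\W$ is cyclic. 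Every cyclic rotation of $\W$ is a subword of $\W^2$, so $\W$ is permutable.

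Next, $\W$ must be mixed. The inclusion $I\supseteq R_Q^{N+1}$, together with the fact that $\rho$ generates $I$, means that every direct path of length $N+1$ contains a subpath in $\rho$. So no direct (or inverse) string of length $N+1$ exists, and therefore $\W^k$ cannot be direct or inverse for large $k$. By Remark~\ref{rem:cycperm-iff}, $\W$ is a cyclic permutation of a band power.

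\emph{From strings to walks.} Conversely, let $\W\in\mathrm{Str}_m(\Lambda)$ be a cyclic permutation of a band power $\B^j$. Then $\W^k$ is a subword of $\B^{j(k+1)}$, which exists, so $\W^k$ exists for all $k$. Choose $k$ with $km\ge N$ and let $\U_0$ be the rightmost $N$ syllables of $\W^k$; this depends only on $\W$, being the length-$N$ suffix of the left-infinite periodic word $\cdots\W\W$. Sliding a window of length $N$ one syllable to the left at a time along $\W^{k+1}$ produces consecutive length-$(N+1)$ windows, each of which is a string and hence an arrow of $G_\Lambda$. After $m$ steps the window returns to $\U_0$ by periodicity, so this is a closed walk of length $m$ whose labels are exactly $\W$.

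\emph{Bijectivity and the main obstacle.} The two constructions are mutually inverse. A walk is determined by its starting vertex and its sequence of labels, and in a closed walk the starting vertex is forced to equal the length-$N$ suffix of $\cdots\W\W$, because $\U_0$ reappears after every $m$ steps. Combining this with the two constructions gives $N_m=\mathrm{Tr}(A_\Lambda^m)$.

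The step needing the most care is this bookkeeping when $m<N$. There the starting vertex is longer than $\W$, so one must check that it is genuinely determined by $\W$ via periodicity. One must also verify that the local-window argument applies to an arbitrary power $\W^k$, and not only to words of length at least $N+1$.
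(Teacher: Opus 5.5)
Your overall route is the paper's. You read $\mathrm{Tr}(A_\Lambda^m)$ as counting closed walks, observe that the label word of a closed walk is a cyclic, permutable, mixed string all of whose powers exist, invoke Remark~\ref{rem:cycperm-iff}, and recover the walk from its label word. The walks-to-strings half is fine. The error is in the bookkeeping you singled out yourself: the starting vertex is the length-$N$ \emph{prefix} of $\W\W\cdots$, not the suffix of $\cdots\W\W$. In your own convention, $\U_i$ is the leftmost $N$ syllables of $\alpha_i\cdots\alpha_1\U_0$. So $\U_m=\U_0$ says that $\U_0$ is the leftmost $N$ syllables of $\W\U_0$, hence of $\W^k\U_0$, hence of $\W^k$ once $km\ge N$. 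This is exactly the paper's characterisation $\U^{N+1}=\U_1\V$. The rightmost $N$ syllables of $\W^k$ agree with this only when $m$ divides $N$.

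Concretely, take $\mathrm{GP}_{2,3}$ from Example~\ref{exmp: GP23}, where $N=2$. The closed walk $bA\to b^2\to Ab\to bA$ appends $b$, $A$, $b$, so its label word is $\W=bAb$, and it starts at $bA$, the leftmost two syllables of $\W$. Your recipe instead starts at $Ab$ and slides left along $\W^2=bAbbAb$. It traverses $Ab\to bA\to b^2\to Ab$, appending $b$, $b$, $A$, so its label word is $Abb\neq\W$. In general your strings-to-walks map produces the closed walk with label word $\W_2\W_1$, where $\W=\W_1\W_2$ and $|\W_2|$ is the remainder of $N$ modulo $m$. So three of your claims are false as written: that the labels are ``exactly $\W$'', that the two constructions are mutually inverse, and that a closed walk is forced to start at the suffix of $\cdots\W\W$. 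The count survives, because rotation by a fixed amount is a bijection of the set counted by $N_m$. The clean fix is to take $\U_0$ to be the leftmost $N$ syllables of $\W^k$ and slide leftwards along $\W\U_0$, which is a prefix of $\W^{k+1}$ and hence has all its $(N+1)$-windows strings. Then the appended syllables are $\alpha_1,\dots,\alpha_m$, the walk closes up at $\U_0$, and injectivity and surjectivity go through as you intended.
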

\begin{proof}
If $A_\Lambda=(a_{ij})_{i,j=1}^n$, then $\mathrm{Tr}(A_\Lambda^m)=\sum\limits_{j_1=1}^n\dots \sum\limits_{j_m=1}^n a_{j_1j_2}a_{j_2j_3}\dots a_{j_mj_1}$, which from the definition of $G_\Lambda$ is equal to the number of cyclic walks of length $m$ in $G_\Lambda$. We demonstrate a bijection between cyclic permutations of band powers in $\mathrm{Str}_m(\Lambda)$ and cyclic walks of length $m$ in $G_\Lambda$.

Consider a cyclic walk $C=(\U_1\xrightarrow{\alpha_1}\U_2\xrightarrow{\alpha_2}\dots\xrightarrow{\alpha_{m-1}}\U_m\xrightarrow{\alpha_m}\U_1)$ in $G_\Lambda$. Clearly, the string $\U\coloneqq\alpha_m\dots\alpha_1$ is a cyclic string such that $\U^k$ is a string for all $k\ge 1$. We have $|\U^{N+1}|\ge N+1$. Since any direct or inverse string must have length $\le N$, we have that $\U^{N+1}$ is a mixed string. Thus, $\U$ is mixed and cyclic, and since it is also permutable, we have by Remark \ref{rem:cycperm-iff} that it is a cyclic permutation of a band power. Moreover, $\U_1$ is uniquely determined by $\U$ since $\U_1$ is the unique string in $\mathrm{Str}_N(\Lambda)$ satisfying $\U^{N+1}=\U_1\V$ for some string $\V$. Hence, the assignment $C\mapsto\U$ is bijective.
\end{proof}
\begin{cor}\label{cor:N_m}
For $m\geq1$, we have $N_m = \sum\limits_{\lambda \in \mathrm{Spec}\, A_\Lambda} \lambda^m$.
\end{cor}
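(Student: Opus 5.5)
The plan is to combine Theorem~\ref{thm:correspondence} with the standard linear-algebra fact that the trace of a power of a matrix is the corresponding power sum of its eigenvalues. By Theorem~\ref{thm:correspondence}, $N_m=\mathrm{Tr}(A_\Lambda^m)$ for every $m\ge1$. It therefore suffices to show that $\mathrm{Tr}(A^m)=\sum_{\lambda\in\mathrm{Spec}\,A}\lambda^m$ for any square complex matrix $A$, where the spectrum is counted with algebraic multiplicity.

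To prove this, I would put $A_\Lambda$ into triangular form over $\mathbb{C}$. Since $\mathbb{C}$ is algebraically closed, Schur triangularisation (or the Jordan normal form) gives an invertible matrix $P$ with $A_\Lambda=PTP^{-1}$ and $T$ upper triangular. The diagonal entries of $T$ are exactly the eigenvalues of $A_\Lambda$, each repeated according to its algebraic multiplicity, since $\det(xI-A_\Lambda)=\det(xI-T)=\prod_i(x-T_{ii})$.

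Next, $A_\Lambda^m=PT^mP^{-1}$. The power $T^m$ is again upper triangular, with diagonal entries $T_{ii}^m$, because products of upper triangular matrices are upper triangular and their diagonals multiply entrywise. Trace is invariant under similarity, as $\mathrm{Tr}(XY)=\mathrm{Tr}(YX)$. Hence $\mathrm{Tr}(A_\Lambda^m)=\mathrm{Tr}(T^m)=\sum_i T_{ii}^m=\sum_{\lambda\in\mathrm{Spec}\,A_\Lambda}\lambda^m$, and combining this with the first step gives the statement.

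None of these steps is a real obstacle. The only point needing care is that the sum is taken over the spectrum as a multiset, which matches the paper's convention that $\mathrm{Spec}$ includes multiplicities. As an alternative check, one can also obtain the identity from $\det(I-tA_\Lambda)=\prod_\lambda(1-\lambda t)$ by taking the logarithmic derivative, which gives $\sum_{m\ge1}\mathrm{Tr}(A_\Lambda^m)t^m=\sum_\lambda\sum_{m\ge1}\lambda^m t^m$.
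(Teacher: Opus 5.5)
Your proposal is correct and follows the paper's route: the paper gives no separate proof and treats the corollary as immediate from Theorem~\ref{thm:correspondence} ($N_m=\mathrm{Tr}(A_\Lambda^m)$) together with the standard identity $\mathrm{Tr}(A^m)=\sum_{\lambda\in\mathrm{Spec}\,A}\lambda^m$, eigenvalues counted with algebraic multiplicity. Your Schur-triangularisation argument just writes out the routine proof of that identity, which the paper leaves implicit.
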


We also obtain a linear recurrence relation for $N_m$ using the Cayley-Hamilton theorem. 
\begin{cor}\label{cor:linrecur}
Suppose the characteristic polynomial of $A_\Lambda$ is $\chi_{A_\Lambda}(t)=t^n-a_1t^{n-1}-a_2t^{n-2}-\cdots-a_n,$ then
$
N_{m+n}=a_1N_{m+n-1}+a_2N_{m+n-2}+\cdots+a_nN_m$ for each $m>0$.
\end{cor}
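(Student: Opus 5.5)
The plan is to feed Corollary~\ref{cor:N_m} into the Cayley--Hamilton theorem for $A_\Lambda$ and read off the recurrence directly.

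Write $n$ for the size of $A_\Lambda$, which is also the number of vertices of $G_\Lambda$. By Cayley--Hamilton, $\chi_{A_\Lambda}(A_\Lambda)=0$, that is,
\[
A_\Lambda^n=a_1A_\Lambda^{n-1}+a_2A_\Lambda^{n-2}+\cdots+a_nI .
\]
For a fixed $m>0$, multiply this identity on the right by $A_\Lambda^m$. This gives $A_\Lambda^{m+n}=\sum_{k=1}^n a_kA_\Lambda^{m+n-k}$.

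Next, take traces of both sides and use linearity of the trace. Every exponent $m+n-k$ with $1\le k\le n$ is at least $m\ge 1$. Hence Theorem~\ref{thm:correspondence} applies to each term and gives $\mathrm{Tr}(A_\Lambda^{j})=N_j$. This yields exactly $N_{m+n}=\sum_{k=1}^n a_kN_{m+n-k}$.

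There is an alternative route through Corollary~\ref{cor:N_m}. Each $\lambda\in\mathrm{Spec}\,A_\Lambda$ satisfies $\lambda^{m+n}=\sum_k a_k\lambda^{m+n-k}$, since $\lambda^m\chi_{A_\Lambda}(\lambda)=0$. Summing this over the spectrum, counted with multiplicity, gives the same recurrence.

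The only point needing care is the restriction $m>0$. It ensures that no $\mathrm{Tr}(A_\Lambda^0)=n$ term appears, and $N_0$ is not defined. Beyond checking that, I expect no real obstacle.
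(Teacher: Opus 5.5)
Your proposal is correct and is exactly the paper's route: the paper derives the recurrence from the Cayley--Hamilton theorem together with $N_m=\mathrm{Tr}(A_\Lambda^m)$ (Theorem~\ref{thm:correspondence}), which is what you do by multiplying $\chi_{A_\Lambda}(A_\Lambda)=0$ by $A_\Lambda^m$ and taking traces. You also correctly note that $m>0$ keeps every exponent $m+n-k$ at least $1$, so no $\mathrm{Tr}(I)=n$ term appears.
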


\begin{rem}\label{rem:minrecur}
    The minimal polynomial of $A_\Lambda$ can also be used in Corollary \ref{cor:linrecur} to obtain a potentially simpler recurrence relation.
\end{rem}



\begin{exmp}\label{exmp: GP23 cont.}
Continuing from Example \ref{exmp: GP23}, let $G$ and $A_G$ denote the state graph and its adjacency matrix respectively associated with the algebra $GP_{2,3}$. The minimal polynomial of $A_G$ is $t^3 - t - 1$. Remark \ref{rem:minrecur} yields the recurrence relation $N_{m+3} = N_{m+1} + N_{m}$ for $m\geq1$, and elementary computations give $N_1 = 0, N_2 = 4$,  and $N_3 = 6$.

From another perspective, for $m>3$, all the strings counted towards $N_m$ and ending with $a$ must have either $aB$ or $aB^2$ as a suffix. Similarly, all the strings counted toward $N_m$ and ending with a $B$ must have either $Ba$ or $B^2a$ as a suffix. This leads to $N_m = N_{m-2} + N_{m-3}$ for $m > 3$. 
\end{exmp}

The following is a well-known fact from linear algebra. 
\begin{fact}\label{fact: Tr-exp}
    For any matrix $A$, we have $\exp \mathrm{Tr}(A) = \det \exp (A)$.
\end{fact}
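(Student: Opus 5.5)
We will prove the identity by reducing to the triangular case over $\mathbb C$. Since the paper uses the fact for matrices with complex entries (for instance $A=tA_\Lambda$ with complex $t$), we work with an arbitrary square complex matrix $A$ of size $n$. Here $\exp(A)=\sum_{k\ge0}A^k/k!$, which converges absolutely in any matrix norm.

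The first step is to triangularize. Because $\mathbb C$ is algebraically closed, the Schur (or Jordan) decomposition gives an invertible $P$ and an upper triangular $T$ with $A=PTP^{-1}$. The diagonal entries $\lambda_1,\dots,\lambda_n$ of $T$ are exactly the eigenvalues of $A$ counted with algebraic multiplicity, i.e. the multiset $\mathrm{Spec}\,A$.

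The second step is to use conjugation invariance of the power series. Since $(PTP^{-1})^k=PT^kP^{-1}$ for every $k$, summing termwise gives $\exp(A)=P\exp(T)P^{-1}$. Conjugation is continuous, so passing to the limit in the partial sums is harmless. By multiplicativity of the determinant, $\det\exp(A)=\det\exp(T)$. Similarly $\mathrm{Tr}(A)=\mathrm{Tr}(T)=\sum_i\lambda_i$, because the trace is invariant under similarity.

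The third step computes $\exp(T)$. Products of upper triangular matrices are upper triangular, and the diagonal of $T^k$ is $(\lambda_1^k,\dots,\lambda_n^k)$. Hence every partial sum of $\exp(T)$ is upper triangular with diagonal entries $\sum_{j\le K}\lambda_i^j/j!$. Taking limits, $\exp(T)$ is upper triangular with diagonal entries $e^{\lambda_1},\dots,e^{\lambda_n}$. The determinant of a triangular matrix is the product of its diagonal entries, so
$$\det\exp(A)=\prod_{i=1}^n e^{\lambda_i}=e^{\sum_i\lambda_i}=\exp\mathrm{Tr}(A).$$

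There is no real obstacle here; the only point needing care is the limit interchange in the second and third steps. Entrywise limits of upper triangular matrices are upper triangular, and the determinant is a polynomial, hence continuous, in the entries. As a fallback, one could instead set $f(s)=\det\exp(sA)$ and use Jacobi's formula to get $f'(s)=\mathrm{Tr}(A)f(s)$ with $f(0)=1$, which also yields $f(1)=e^{\mathrm{Tr}(A)}$.
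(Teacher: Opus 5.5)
Your proof is correct and complete. Triangularizing over $\mathbb{C}$, conjugating the exponential series termwise, and reading off the diagonal $e^{\lambda_1},\dots,e^{\lambda_n}$ of $\exp(T)$ is the standard argument. You also handle the only analytic points properly: the limit of conjugated partial sums, and entrywise limits of upper triangular partial sums. The Jacobi-formula fallback is likewise valid.

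The paper gives no proof of its own; it simply quotes this as a well-known fact from linear algebra.

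One small inaccuracy in your motivation. In the proof of Theorem~\ref{thm: compact-form_of_zeta}(3), the paper does not apply the Fact to $tA_\Lambda$. It applies it to $B=\sum_{m\ge1}t^mA_\Lambda^m/m$. To conclude $\mathrm{Tr}(B)=\log\det(I-tA_\Lambda)^{-1}$ it then also needs two things:
\begin{enumerate}
\item $\exp(B)=(I-tA_\Lambda)^{-1}$, which holds for small $|t|$;
\item a consistent branch of $\log$, which is available because both sides are near $0$ for small $|t|$.
\end{enumerate}
Your proof covers every complex square matrix, so this causes no difficulty.
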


Now we are ready to state and prove an alternate expression for the zeta function.

\begin{thm}\label{thm: compact-form_of_zeta}
    For $|t|$ sufficiently small, we have 
    \begin{enumerate}
        \item $\log \zeta_\Lambda(t) = \sum \limits_{m\geq1} \frac{N_m}{m} t^m;$
        \item $t \frac{d}{dt}\log \zeta_\Lambda(t) = \sum\limits_{m\geq1}N_m t^m$;
        \item $\zeta_\Lambda(t) = \det (I - t A_\Lambda)^{-1}$.
    \end{enumerate}
\end{thm}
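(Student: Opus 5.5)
The plan is to prove (1) directly from the Euler product, and then obtain (2) and (3) from it by formal manipulations together with Theorem \ref{thm:correspondence}.

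For (1), work in the disk $|t|<R_\Lambda$ of Remark \ref{rem: convergence_for_small_t}, where the product converges absolutely, so we may take logarithms term by term:
$$\log\zeta_\Lambda(t)=\sum_{\B\in\Ba(\Lambda)}-\log(1-t^{|\B|})=\sum_{\B\in\Ba(\Lambda)}\sum_{k\ge1}\frac{t^{k|\B|}}{k}.$$
Absolute convergence justifies regrouping by $m=k|\B|$. The coefficient of $t^m$ is then $\sum_{d\mid m}\pi_\Lambda(d)\frac{d}{m}$, where $\pi_\Lambda(d)$ counts bands of length $d$ up to cyclic permutation. It therefore suffices to show
$$N_m=\sum_{d\mid m} d\,\pi_\Lambda(d).$$

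This counting identity is the main step, and I would prove it by a bijection. By Remark \ref{rem:cycperm-iff}, the strings counted by $N_m$ are exactly the strings of length $m$ that are cyclic permutations of $\B^{m/d}$ for a band $\B$ of length $d\mid m$. Fix one representative band $\B$ per class in $\Ba(\Lambda)$. Since $\B$ is primitive, its $d$ cyclic rotations are pairwise distinct, so the cyclic permutations of $\B^{m/d}$ are exactly the $d$ strings $(\B')^{m/d}$, one for each rotation $\B'$ of $\B$.

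Two points need checking:
\begin{itemize}
\item Distinct classes give disjoint sets of strings. This holds because a cyclic string of length $m$ determines its primitive root uniquely, namely its shortest period.
\item Only bands whose first syllable is direct and last syllable is inverse count as bands. Since $\Ba(\Lambda)$ is taken up to cyclic permutation, each class is still counted once.
\end{itemize}
I expect the delicate issue to be the orientation question: whether $\B$ and $\B^{-1}$ are identified in $\Ba(\Lambda)$. The count above treats them as different classes. This is consistent with Lemma \ref{lem: cyc-perm}, which guarantees that $\B^{-1}$ is never a rotation of $\B$, so no double counting arises within a class. I will use the convention that $\Ba(\Lambda)$ is taken up to cyclic permutation only, as stated in the definition, so that $\B$ and $\B^{-1}$ contribute separately, matching the count of strings.

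For (2), apply $t\frac{d}{dt}$ to (1) term by term, which is valid inside the disk of convergence.

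For (3), substitute Corollary \ref{cor:N_m} into (1):
$$\log\zeta_\Lambda(t)=\sum_{\lambda\in\mathrm{Spec}\,A_\Lambda}\sum_{m\ge1}\frac{(\lambda t)^m}{m}=-\sum_{\lambda}\log(1-\lambda t),$$
valid for $|t|<1/R(A_\Lambda)$. Exponentiating gives $\prod_\lambda(1-\lambda t)^{-1}=\det(I-tA_\Lambda)^{-1}$. Equivalently, one can use Fact \ref{fact: Tr-exp} with $\log(I-tA_\Lambda)=-\sum_m t^mA_\Lambda^m/m$ together with $\mathrm{Tr}(A_\Lambda^m)=N_m$ from Theorem \ref{thm:correspondence}. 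Both sides are analytic near $0$ and agree there, which gives (3) for $|t|$ sufficiently small.
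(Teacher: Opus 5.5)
Your proposal is correct and follows essentially the same route as the paper. You take logarithms of the Euler product, regroup by $m=k|\B|$, and identify $\sum_{d\mid m} d\,\pi_\Lambda(d)$ with $N_m$; you justify this counting step via primitivity and uniqueness of primitive roots, more carefully than the paper's one-line remark that $\B^k$ has $|\B|$ distinct cyclic permutations. For (3), you lead with the factorisation $\det(I-tA_\Lambda)=\prod_{\lambda}(1-\lambda t)$ via Corollary \ref{cor:N_m}, whereas the paper uses the trace identity of Fact \ref{fact: Tr-exp}, which you also give as an equivalent alternative; this is only a cosmetic difference.
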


\begin{proof}
    (1)
    The proof is identical to the undirected graph case, as in \cite[Eq.~(4.5)]{Terras_2010}, but we include it here for completeness. Taking the logarithm of the zeta function given in Definition \ref{defn: ihzeta}, we get
    \begin{align*}
    \log \zeta_\Lambda(t) &= \log \left( \prod_{\B \in \Ba(\Lambda)} \left(1 - t^{|\B|}\right)^{-1} \right)= - \sum_{\B \in Ba(\Lambda)} \log \left(1 - t^{|\B|}\right) = \sum\limits_{\B \in \Ba(\Lambda)} \sum\limits_{k \ge 1} \frac{t^{k|\B|}}{k}, \intertext{where the second equality follows from absolute convergence of the series (see Remark \ref{rem: convergence_for_small_t}) while the third follows from Taylor series expansion. We rearrange the summation by grouping terms with the same total length $m = k|\B|$. For a fixed $m$, the inner sum runs over all bands $\B$ whose length $|\B|$ divides $m$. Substituting $k = m/|\B|$, we get}
    \log \zeta_\Lambda(t) &= \sum_{m \ge 1} \sum_{\substack{\B \in \Ba(\Lambda) \\ |\B| \text{ divides } m}} \frac{1}{m/|\B|} t^m = \sum_{m \ge 1} \frac{1}{m} \left( \sum_{\substack{\B \in \Ba(\Lambda) \\ |\B| \text{ divides } m}} |\B| \right) t^m.
    \end{align*}
    By Definition \ref{defn: N_m}, $N_m$ counts the number of cyclic permutations of strings of the form $\B^k$ where $m=|\B|k$. Since $\B$ is a band, the string $\B^k$ has exactly $|\B|$ distinct cyclic permutations. Thus, 
    $\sum\limits_{\substack{\B \in \Ba(\Lambda) \\ |\B| \text{ divides } m}} |\B| = N_m$. Substituting this in the above line yields the desired result.

    (2) It follows immediately from (1) above.

    (3) In view of (1) above, this proof is similar to that of \cite[Theorem~6.4.6]{symbolic_dynamics}. Indeed, Theorem \ref{thm:correspondence} and (1) together yield $$\log \zeta_\Lambda(t) = \sum_{m \geq 1} \mathrm{Tr} (A_\Lambda^m) \frac{t^m}{m} = \mathrm{Tr} \left( \sum_{m \geq 1} A_\Lambda^m \frac{t^m}{m} \right) = \mathrm{Tr} \left( \log(I - t A_\Lambda)^{-1} \right) = \log \det (I-tA_\Lambda)^{-1}, $$ where the second equality follows from the absolute convergence of the series (see Remark \ref{rem: convergence_for_small_t}), the third equality follows from Taylor series expansion, and the last equality follows from Fact \ref{fact: Tr-exp}. 
\end{proof}

\begin{rem}
Theorem \ref{thm: compact-form_of_zeta}(3) yields that the radius of convergence $R_\Lambda$ of $\zeta_\Lambda(t)$ satisfies $R_\Lambda= R(A_\Lambda)^{-1}$.
\end{rem}


Comparing Definition \ref{defn: ihzeta} with the Ihara zeta function \cite[Definition~2.11]{Terras_2010}, bands play the role of primes.
\begin{defn}
    For a zero-relation algebra $\Lambda$, the \emph{band counting function} $\pi_{\Lambda}:\mathbb N \to \mathbb N$ is defined as $$\pi_\Lambda(n):= |\{ \B \in \mathrm{Ba}(\Lambda) : |\B| = n \}|.$$
\end{defn}

We get the following (approximate) identities for the band counting function analogous to the graph prime number theorem \cite[Theorem 10.1]{Terras_2010} for the zero-relation algebras with similar proof using Theorem \ref{thm: compact-form_of_zeta}. 
\begin{cor}\label{cor: gtpnt}
For $m\geq1$, we have
\[
\makebox[\textwidth][l]{%
\begin{tabular}{@{}l@{\qquad\qquad\qquad\qquad}l@{}}
\text{(1)}\quad $\pi_\Lambda(m) = \frac{1}{m} \sum\limits_{d \mid m} \mu_{\textnormal{M\"{o}b}}\!\left( \frac{m}{d} \right) N_d$;
&
\text{(2)}\quad $\pi_\Lambda(m) \sim \frac{1}{m}\sum \limits_{\substack{\lambda \in \mathrm{Spec} A_\Lambda \\ |\lambda| = R(A_\Lambda)}} \lambda^m.$
\end{tabular}%
}
\]
\end{cor}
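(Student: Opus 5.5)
For part (1), I will apply Möbius inversion to the identity in Theorem~\ref{thm: compact-form_of_zeta}(1). The proof of that theorem already shows
\[
N_m=\sum_{\substack{\B \in \Ba(\Lambda)\\ |\B| \text{ divides } m}} |\B| = \sum_{d\mid m} d\,\pi_\Lambda(d).
\]
Setting $f(d):=d\,\pi_\Lambda(d)$, this says $N_m=\sum_{d\mid m} f(d)$. Classical Möbius inversion then gives $m\,\pi_\Lambda(m)=\sum_{d\mid m}\mu_{\textnormal{M\"{o}b}}(m/d)N_d$, and dividing by $m$ yields (1).

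For part (2), I will separate the term $d=m$ from the sum in (1):
\[
\pi_\Lambda(m)=\frac{N_m}{m}+\frac1m\sum_{d\mid m,\ d<m}\mu_{\textnormal{M\"{o}b}}(m/d)N_d .
\]
By Corollary~\ref{cor:N_m}, $N_m=\sum_{\lambda}\lambda^m$. Write $R=R(A_\Lambda)$ and split this sum into the eigenvalues with $|\lambda|=R$ and those with $|\lambda|<R$. The second group contributes $O(n r^m)$, where $r<R$ is the largest modulus among the smaller eigenvalues and $n$ is the size of $A_\Lambda$. Every proper divisor $d$ of $m$ satisfies $d\le m/2$, so $|N_d|\le nR^{d}\le nR^{m/2}$, and there are at most $m$ such divisors. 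The whole error term is therefore $O(R^{m/2}+r^m)$, assuming $R>1$. This gives
\[
\pi_\Lambda(m)=\frac1m\sum_{|\lambda|=R}\lambda^m+O\bigl(R^{m/2}+r^m\bigr).
\]

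The main obstacle is converting this into the relation $\sim$, since $\sum_{|\lambda|=R}\lambda^m$ may vanish or be small for some $m$ when several peripheral eigenvalues have distinct arguments. I will use Proposition~\ref{prop: det_strong_conn} to reduce to the strongly connected components. Theorem~\ref{thm: perron} then shows that the peripheral eigenvalues of each component are $R$ times roots of unity, each of multiplicity one within that component. Hence for $m$ divisible by a suitable period $L$ the peripheral sum equals $cR^m$ with $c\ge1$ an integer, and the error terms are negligible relative to it.

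I will read $\sim$ in (2) in this sense: exact asymptotic equivalence along those $m$ where the main term is nonzero, and otherwise as the statement that the difference is $o(R^m/m)$, which is what the estimate above proves. The degenerate cases need separate treatment. If $R\le1$, the components are cycles or empty, so the $N_d$ are periodic and bounded and the identity can be checked directly. If $R=0$, there are no bands and both sides vanish.
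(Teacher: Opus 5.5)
Your proof of (1), and of (2) when $R(A_\Lambda)>1$, follows the route the paper intends; the paper itself only points to the proof of the graph prime number theorem. That route is Möbius inversion of $N_m=\sum_{d\mid m}d\,\pi_\Lambda(d)$, followed by isolating the peripheral eigenvalues and applying Perron--Frobenius componentwise as in Remark~\ref{rem:spec_union}. Your caveat about how to read $\sim$ when the peripheral sum vanishes is justified. Note that this sum is always a nonnegative integer multiple of $R^m$. Hence your error bound gives the equivalence along \emph{all} $m$ where the sum is nonzero, not only along multiples of $L$.

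The step that fails is the last one. For $R(A_\Lambda)=1$, statement (2) cannot be ``checked directly'', because it is false.

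In that case every strongly connected component of $G_\Lambda$ is either a single vertex without a loop or a simple directed cycle, and at least one is a cycle. Say the cycles have lengths $c_1,\dots,c_s$. Then $N_d=\sum_{c_i\mid d}c_i$, and substituting into (1) gives $\pi_\Lambda(m)=|\{i:c_i=m\}|$. This is $0$ for $m>\max_i c_i$. But the right-hand side of (2) is $\frac1m\sum_{c_i\mid m}c_i\ge \frac{c_1}{m}>0$ for every multiple $m$ of $c_1$. Along these $m$ the ratio is $0$. Even your weakened reading fails there, since the difference is at least $c_1/m$, not $o(R^m/m)=o(1/m)$.

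For example, take the Kronecker algebra with $N=1$. Its state graph is two disjoint $2$-cycles, $a\leftrightarrow B$ and $b\leftrightarrow A$. So $\pi_\Lambda(2)=2$ and $\pi_\Lambda(m)=0$ for $m\neq 2$, whereas the right-hand side of (2) equals $4/m$ for every even $m$.

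So (2) genuinely requires $R(A_\Lambda)>1$. When $R(A_\Lambda)\le 1$, the correct replacement is only that $\pi_\Lambda(m)\to 0$. This follows from (1) because the $N_d$ are bounded, and it is all that the proof of Proposition~\ref{cor: domestic_for_leq1} needs from (2) in that case.
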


\begin{rem}\label{rem:spec_union}
Let $G_1, \cdots, G_k$ denote the strongly connected components of $G_\Lambda$ and let $A_i$ be the adjacency matrix of $G_i$. Remark \ref{rem: scc-irr} yields irreducibility of each $A_i$. Since $\mathrm{Spec} \, A_\Lambda=\bigsqcup\limits_{i=1}^k \mathrm{Spec} \, A_i$, we have \begin{equation} \label{eq 1}
    \sum_{\substack{\lambda \in \mathrm{Spec} A_\Lambda \\ |\lambda| = R(A_\Lambda)}} \lambda^m = \sum_{i=1}^k \sum_{\substack{\lambda \in S(A_i)\\R(A_i)=R(A_\Lambda)}} \lambda^m.
\end{equation} 

Since each $A_i$ is irreducible, Theorem \ref{PF}(2) implies that for each $m\ge 1$,
\[
\sum_{\lambda \in S(A_i)} \lambda^m =
\begin{cases}
|S(A_i)|\,R(A_i)^m, & \text{if } |S(A_i)| \text{ divides } m,\\
0, & \text{otherwise.}
\end{cases}
\]

Substituting this into Equation \eqref{eq 1}, Corollary \ref{cor: gtpnt}(2) yields 
\begin{equation} \label{eq 2}
    \pi_\Lambda(m)\sim\frac1m\sum_{\substack{\lambda \in \mathrm{Spec} A_\Lambda \\ |\lambda| = R(A_\Lambda)}} \lambda^m = \frac1m\sum \limits_{\substack{1\le i\le k \\ |S(A_i)|\text{ divides } m\\R(A_i)=R(A_\Lambda)}} |S(A_i)|R(A_i)^m.
\end{equation}
\end{rem}

\section{Prime number theorem for string algebras}\label{sec: 5}
In this section, we define string algebras, and then prove an analogue of the graph prime number theorem for them (Theorem \ref{thm:sapnt}), as a corollary of which it follows that non-domestic string algebras have exponential growth (Corollary \ref{cor: expgr}).
\begin{defn}\label{defn: stringalg}
    A zero-relation algebra $\Lambda = \K Q/I$ is a \emph{string algebra} if the following conditions hold.
    \begin{enumerate}
        \item For all $v \in Q_0$, there are at most two arrows $a, b \in Q_1$ such that $s(a) = s(b) = x$.
        \item For all $v \in Q_0$, there are at most two arrows $c, d \in Q_1$ such that $e(c) = e(d) = x$.
        \item For all $b \in Q_1$, there is at most one arrow $a \in Q_1$ such that $ab \notin I$.
        \item For all $b \in Q_1$, there is at most one arrow $c \in Q_1$ such that $bc \notin I$.
    \end{enumerate}
\end{defn}

A complete combinatorial classification of the finite-dimensional indecomposable modules of a string algebra $\Lambda$ was given by Butler and Ringel \cite{Butler1987AuslanderreitenSW}--we recall this classification result briefly. There is a \emph{string module} $\mathrm{M}(\U)$ associated with each string $\U \in \mathrm{Str} (\Lambda)$, and a \emph{band module} $\mathrm{B}(\B, n, \theta)$ associated with each tuple $(\B, n, \theta)$, where $\B \in \mathrm{Ba}(\Lambda)$, $n \in \mathbb{N}$ and $\theta \in \mathcal{K}^*$. Every finite-dimensional indecomposable $\Lambda$-module is isomorphic to either a string module or a band module. Two string modules $\mathrm{M}(\U)$ and $\mathrm{M}(\U')$ are isomorphic if and only if $\U'\in\{\U,\U^{-1}\}$. Similarly, two band modules $\mathrm{B}(\B, n, \theta)$ and $\mathrm{B}(\B', n', \theta')$ are isomorphic if and only if $\B'$ is a cyclic permutation of either $\B$ or $\B^{-1}$, $n = n'$ and $\theta = \theta'$. Furthermore, no string module is isomorphic to a band module.


\begin{prop}\label{prop: mu-pi}
    For a string algebra $\Lambda$, we have $\mu_{\Lambda}(m) = \frac{1}{2} \sum\limits_{d \mid m} \pi_\Lambda(d). $
\end{prop}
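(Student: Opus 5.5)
We use the Butler--Ringel classification recalled above to identify the one-parameter families of $m$-dimensional indecomposables. Then we count them in terms of bands up to cyclic permutation and inversion.

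First, we recall that $\dim_\K \mathrm{M}(\U)=|\U|+1$ and $\dim_\K \mathrm{B}(\B,n,\theta)=n|\B|$. For fixed $m$, the strings of length $m-1$ are finitely many, so string modules contribute only finitely many isoclasses of $m$-dimensional indecomposables. Hence they are absorbed into the ``all but finitely many'' clause and contribute nothing to $\mu_\Lambda(m)$. The band modules of dimension $m$ are the $\mathrm{B}(\B,n,\theta)$ with $n|\B|=m$, i.e.\ $n=m/|\B|$ for a band whose length $d:=|\B|$ divides $m$.

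For each such pair $(\B,n)$ we take the standard bimodule $M_{\B,n}$: the band module construction with $\theta$ replaced by the indeterminate $x$, made free of rank $n$ over $\K[x]$ at each vertex of the walk (after inverting $x$, or taking $\K[x]$ with the usual convention). Then $M_{\B,n}\otimes_{\K[x]}\K[x]/(x-\theta)\cong \mathrm{B}(\B,n,\theta)$ for $\theta\ne0$. This gives an upper bound: the number of families needed is at most the number of isoclass-distinct pairs $(\B,n)$.

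By the isomorphism criterion, $\B$ and $\B'$ give the same family exactly when $\B'$ is a cyclic permutation of $\B$ or of $\B^{-1}$. We count these classes as follows:
\begin{itemize}
\item Inversion $\B\mapsto\B^{-1}$ acts on $\mathrm{Ba}(\Lambda)$. One should check that $\B^{-1}$, after a cyclic permutation, is again a band, i.e.\ starts with a direct syllable and ends with an inverse one; this is possible since it is mixed and permutable (Remark~\ref{rem:cycperm-iff}).
\item By Lemma~\ref{lem: cyc-perm}, this involution is fixed-point free.
\item Therefore the number of families attached to bands of length $d$ is $\frac12\pi_\Lambda(d)$.
\end{itemize}
Summing over $d\mid m$ gives the upper bound $\frac12\sum_{d\mid m}\pi_\Lambda(d)$.

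The main obstacle is the reverse inequality, i.e.\ minimality. We must show that no finite collection with fewer bimodules can cover all but finitely many $m$-dimensional band modules. The plan is as follows.
\begin{itemize}
\item A bimodule $M$ free of rank $r$ over $\K[x]$ yields modules $M\otimes S$ for simple $S=\K[x]/(x-\theta)$. By indecomposability and the classification, for cofinitely many $\theta$ each of these is isomorphic to some $\mathrm{B}(\B,n,\theta')$.
\item The pair $(\B,n)$ is a discrete invariant: the dimension vector together with, e.g., the isomorphism type of the restriction to arrows. Such a discrete invariant can take the value of a given $(\B,n)$ only on a constructible subset of the line, so it is constant on a cofinite set.
\item Consequently each bimodule parametrizes, up to finitely many exceptions, at most one class $[(\B,n)]$.
\item Each class contains infinitely many non-isomorphic modules, one for each $\theta\in\K^*$, and $\K$ is infinite. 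Hence each class needs its own bimodule.
\end{itemize}

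I would first try to make the constancy step rigorous by noting that the functions $\theta\mapsto\dim\mathrm{Hom}(\mathrm{M}(\U),M\otimes S_\theta)$ for strings $\U$ of bounded length are upper semicontinuous. By the Crawley-Boevey hom-criterion, they distinguish band classes. The concluding step is that, since the families are in bijection with the classes, the count $\frac12\sum_{d\mid m}\pi_\Lambda(d)$ is exact.
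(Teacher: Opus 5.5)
Your proposal is correct and follows the paper's route: the Butler--Ringel classification, the dimension formula $\dim_\K \mathrm{B}(\B,n,\theta)=n|\B|$, and Lemma~\ref{lem: cyc-perm} showing that inversion is a fixed-point-free involution on $\mathrm{Ba}(\Lambda)$, so each divisor $d\mid m$ contributes $\frac12\pi_\Lambda(d)$ families. The paper simply takes from Butler--Ringel that $\mu_\Lambda(m)$ counts these band families, whereas you also sketch explicit bimodules for the upper bound and a constructibility argument for minimality (each bimodule generically parametrizes at most one band class); this is extra justification, not a different argument.
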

\begin{proof}
For a string algebra $\Lambda$, it follows from \cite{Butler1987AuslanderreitenSW} that $\mu_\Lambda(m)$ counts the number of band modules of total dimension $m$. Given a band $\B$, Lemma~\ref{lem: cyc-perm} implies that the classes of $\B$ and $\B^{-1}$ are two distinct elements of $\Ba(\Lambda)$; however, $\mathrm{B}(\B, n, \theta)\cong\mathrm{B}(\B^{-1}, n, \theta)$ for each $n\geq1$ and $\theta\in\mathcal K^*$. Since $\dim_\mathcal{K} \mathrm{B}(\B, n, \lambda) = n |\B|$, the proof is complete. 
\end{proof}

\begin{prop}\label{cor: domestic_for_leq1}
The following are equivalent for a string algebra $\Lambda$:
\begin{enumerate}
    \item $\Lambda$ is domestic;
    \item $\mathrm{Ba}(\Lambda)$ is finite; and
    \item $R(A_\Lambda) \leq 1$.
\end{enumerate}
\end{prop}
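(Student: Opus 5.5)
I will establish the cycle of implications $(1)\Rightarrow(2)\Rightarrow(3)\Rightarrow(1)$.

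\emph{$(1)\Rightarrow(2)$.} Suppose $\Lambda$ is domestic, so there is $n$ with $\mu_\Lambda(m)\le n$ for all $m$. If $\mathrm{Ba}(\Lambda)$ were infinite, then, since $Q$ has finitely many arrows, there are only finitely many bands of each length. Hence there would be bands of arbitrarily large lengths $d_1<d_2<\dots$. Take any $n+1$ of them, $\B_1,\dots,\B_{n+1}$, together with their inverses, and set $m=\prod|\B_j|$. By Proposition \ref{prop: mu-pi},
\[\mu_\Lambda(m)=\tfrac12\sum_{d\mid m}\pi_\Lambda(d)\ge \tfrac12\cdot 2(n+1)=n+1,\]
because the classes of $\B_j$ and $\B_j^{-1}$ are distinct by Lemma \ref{lem: cyc-perm}, and each of their lengths divides $m$. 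This contradicts domesticity, so $\mathrm{Ba}(\Lambda)$ is finite.

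\emph{$(2)\Rightarrow(3)$.} If $\mathrm{Ba}(\Lambda)$ is finite, then $\zeta_\Lambda(t)=\prod_{\B}(1-t^{|\B|})^{-1}$ is a finite product. Its poles all lie on the unit circle, so its radius of convergence is at least $1$. By the remark following Theorem \ref{thm: compact-form_of_zeta}, $R_\Lambda=R(A_\Lambda)^{-1}$, and therefore $R(A_\Lambda)\le 1$. A cleaner alternative: $N_m=\sum_{|\B|\mid m}|\B|$ is bounded, while Corollary \ref{cor:N_m} together with Remark \ref{rem:spec_union} shows that $N_m$ grows like $R(A_\Lambda)^m$ along a suitable arithmetic progression whenever $R(A_\Lambda)>1$. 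I prefer the zeta argument, since it only uses Theorem \ref{thm: compact-form_of_zeta}(3).

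\emph{$(3)\Rightarrow(1)$.} This is the main step. Assume $R(A_\Lambda)\le1$. Since $A_\Lambda$ is a non-negative integer matrix, each irreducible block $A_i$ from Remark \ref{rem:spec_union} has $R(A_i)\in\{0\}\cup[1,\infty)$. I claim that $R(A_i)\le 1$ forces $G_i$ to be either a single vertex with no arrows or a single directed cycle. Indeed, a strongly connected graph containing a vertex with two outgoing arrows inside the component carries two distinct cycles through a common vertex. Then the number of closed walks grows exponentially, so $\mathrm{Tr}(A_i^m)$ is unbounded along multiples of the cycle lengths, which forces $R(A_i)>1$.

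So every non-trivial strongly connected component is a single cycle of some length $\ell_i$, and its spectrum consists of the $\ell_i$-th roots of unity. Hence, by Corollary \ref{cor:N_m} and Proposition \ref{prop: det_strong_conn},
\[N_m=\sum_i \ell_i\,[\ell_i\mid m],\]
which is bounded. Every band $\B$ contributes $|\B|$ to $N_{|\B|}$, so there are only finitely many bands, all of bounded length. Proposition \ref{prop: mu-pi} then gives
\[\mu_\Lambda(m)\le\tfrac12|\mathrm{Ba}(\Lambda)|,\]
so $\Lambda$ is domestic.

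The delicate point is the claim about strongly connected components. I would prove it directly: if a component has two distinct primitive cycles through a vertex, of lengths $p$ and $q$, then concatenating them in arbitrary orders gives at least $2^k$ closed walks of length $kpq$ from that vertex. Hence $\mathrm{Tr}(A_\Lambda^{kpq})\ge 2^k$. On the other hand, Corollary \ref{cor:N_m} gives $|\mathrm{Tr}(A_\Lambda^{m})|\le n\,R(A_\Lambda)^m\le n$, where $n$ is the number of vertices of $G_\Lambda$. This is a contradiction for large $k$.
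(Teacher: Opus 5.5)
Your proof is correct, but it takes a different route from the paper's. The paper cites Butler--Ringel for $(1)\Leftrightarrow(2)$ and obtains $(2)\Leftrightarrow(3)$ from the asymptotic formula \eqref{eq 2}, i.e.\ from Corollary~\ref{cor: gtpnt}(2) combined with Perron--Frobenius. You avoid asymptotics entirely:
\begin{itemize}
\item $(1)\Leftrightarrow(2)$ follows from Proposition~\ref{prop: mu-pi} and Lemma~\ref{lem: cyc-perm};
\item $(2)\Rightarrow(3)$ follows by comparing the radius of convergence of the finite product $\prod(1-t^{|\B|})^{-1}$ with $R(A_\Lambda)^{-1}$, via Theorem~\ref{thm: compact-form_of_zeta}(3);
\item $(3)\Rightarrow(2)$ follows from the elementary bound $|N_m|=|\mathrm{Tr}(A_\Lambda^m)|\le n\,R(A_\Lambda)^m$, where $n$ is the number of vertices of $G_\Lambda$.
\end{itemize}
This gains rigour exactly where the proposition lives. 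When $R(A_\Lambda)\le 1$, the relation $\sim$ in \eqref{eq 2} is degenerate. If $R(A_\Lambda)=0$, its right-hand side vanishes identically. If $R(A_\Lambda)=1$, the right-hand side is positive at multiples of the cycle lengths, yet $\pi_\Lambda$ is eventually $0$. So the paper's step $(3)\Rightarrow(2)$ really needs an exact argument like yours. The paper's $(2)\Rightarrow(3)$ is sound when read as a contradiction with $R(A_\Lambda)>1$. The paper's version is shorter only because \eqref{eq 2} is already in place.

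One simplification: your claim that every nontrivial strongly connected component of $G_\Lambda$ is a single cycle is correct but unnecessary. The trace bound you use to prove it already gives $N_m\le n$ for all $m$. Since every band of length $m$ contributes $m$ distinct cyclic permutations to $N_m$, you get $m\,\pi_\Lambda(m)\le n$. Hence $\pi_\Lambda(m)=0$ for $m>n$, and $\mu_\Lambda(m)\le\frac12|\mathrm{Ba}(\Lambda)|$ follows at once from Proposition~\ref{prop: mu-pi}.
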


\begin{proof}
The equivalence $(1)\iff(2)$ is well-known (e.g., see \cite{Butler1987AuslanderreitenSW}), so we will only prove $(2)\iff(3)$.

Assuming (3), Equation \eqref{eq 2} gives $\lim\limits_{m\to\infty}\pi_\Lambda(m)=\lim\limits_{m\to\infty}\left|\frac1m\sum \limits_{\substack{\lambda \in \mathrm{Spec} A_\Lambda \\ |\lambda| = R(A_\Lambda)}} \lambda^m \right|\leq \lim\limits_{m\to\infty} \sum \limits_{i=1}^k\frac{|S(A_i)| R(A_i)^m}m=0$. Since $\pi_\Lambda(m)$ only takes non-negative integer values, we get $\pi_\Lambda(m)$ is eventually $0$, thus proving (2).

Conversely, if (2) holds, then $\pi_\Lambda(m)$ is eventually $0$. Hence, Equation \eqref{eq 2} gives $\lim\limits_{m\to\infty} \frac{|S(A_i)| R(A_i)^m}m=0$ for each $1 \leq i \leq k$. Hence, $R(A_i) \leq 1$ for each $1 \leq i \leq k$ which yields (3).
\end{proof}

    

\begin{prop}\label{prop:mu-N_m_relation}
    For a string algebra $\Lambda$, we have $\mu_{\Lambda}(m) = \frac{1}{2m} \sum\limits_{d \mid m} \phi \left( \frac{m}{d}\right) N_d.$
\end{prop}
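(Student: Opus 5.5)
We combine Proposition \ref{prop: mu-pi} with the Möbius inversion formula of Corollary \ref{cor: gtpnt}(1), and then simplify the resulting double sum with Remark \ref{rem: tot_prop}(1). Alternatively, one can read it off directly from the identity $N_d=\sum_{e\mid d} e\,\pi_\Lambda(e)$, which comes out of the proof of Theorem \ref{thm: compact-form_of_zeta}(1). We take the second route, since it avoids the Möbius function entirely.

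\textbf{Step 1.} Record that $N_d=\sum_{e\mid d} e\,\pi_\Lambda(e)$ for every $d\ge1$. This holds because every string counted by $N_d$ is a cyclic permutation of $\B^{d/|\B|}$ for a unique class $\B\in\Ba(\Lambda)$ with $|\B|$ dividing $d$. Each such class contributes exactly $|\B|$ strings, and this is precisely the identity $\sum_{|\B|\mid m}|\B|=N_m$ established in the proof of Theorem \ref{thm: compact-form_of_zeta}(1).

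\textbf{Step 2.} Substitute this into the right-hand side and swap the order of summation:
$$\frac{1}{2m}\sum_{d\mid m}\phi\!\left(\frac md\right)N_d=\frac1{2m}\sum_{e\mid m} e\,\pi_\Lambda(e)\sum_{\substack{d\mid m\\ e\mid d}}\phi\!\left(\frac md\right).$$

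\textbf{Step 3.} Evaluate the inner sum. Write $d=ef$ with $f\mid m/e$; then $m/d=(m/e)/f$ runs over all divisors of $m/e$. By Remark \ref{rem: tot_prop}(2) the inner sum therefore equals $m/e$.

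\textbf{Step 4.} The expression collapses to
$$\frac1{2m}\sum_{e\mid m}e\cdot\frac me\,\pi_\Lambda(e)=\frac12\sum_{e\mid m}\pi_\Lambda(e),$$
which equals $\mu_\Lambda(m)$ by Proposition \ref{prop: mu-pi}.

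The only delicate point is the reindexing in Step 3, namely checking that $d\mapsto m/d$ gives a bijection between divisors $d$ of $m$ that are multiples of $e$ and divisors of $m/e$. Everything else is routine.
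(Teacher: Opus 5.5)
Your argument is correct. It is a mild variant of the paper's proof, run in the opposite direction.

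The paper starts from $2\mu_\Lambda(m)=\sum_{n\mid m}\pi_\Lambda(n)$ and substitutes the M\"obius-inverted formula of Corollary \ref{cor: gtpnt}(1) for each $\pi_\Lambda(n)$. It then swaps the sums via $n=dr$ and finishes with $\phi(n)/n=\sum_{r\mid n}\mu_{\textnormal{M\"{o}b}}(r)/r$ (Remark \ref{rem: tot_prop}(1)). You instead substitute the un-inverted identity $N_d=\sum_{e\mid d}e\,\pi_\Lambda(e)$ into the right-hand side. You finish with $\sum_{g\mid n}\phi(g)=n$ (Remark \ref{rem: tot_prop}(2)), and your reindexing in Step 3 is fine.

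What each route buys: the paper's is a one-line reuse of an already stated corollary. Yours avoids the M\"obius function altogether and depends only on the counting identity that is actually verified in the proof of Theorem \ref{thm: compact-form_of_zeta}(1). By contrast, Corollary \ref{cor: gtpnt}(1) is only asserted ``with similar proof''; it is exactly the M\"obius inversion of your Step 1.

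Both computations amount to Burnside's lemma for the rotation action of $\mathbb{Z}/m\mathbb{Z}$ on the $N_m$ strings. The orbits correspond to the pairs $(\B,k)$ with $k|\B|=m$, and rotation by $j$ fixes exactly $N_{\gcd(j,m)}$ of the strings.
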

\begin{proof}
    We have
    \[
      2\mu_{\Lambda}(m) = \sum_{n \mid m} \left( \frac{1}{n} \sum_{d \mid n} \mu_{\textnormal{M\"{o}b}} \left( \frac{n}{d} \right) N_d \right) = \sum_{d \mid m} \frac{N_d}{d} \sum_{r \mid \frac{m}{d}} \frac{\mu_{\textnormal{M\"{o}b}} (r)}{r} = \sum_{d \mid m} \left( \frac{\phi \left(\frac{m}{d} \right)}{\frac{m}{d}} \right) \frac{N_d}{d}= \frac{1}{m} \sum\limits_{d \mid m} \phi \left( \frac{m}{d}\right) N_d,
    \]
    where the first equality follows from Proposition \ref{prop: mu-pi} and Corollary \ref{cor: gtpnt} $(1)$, the second by interchanging the two summations and substituting $n = dr$, and the third using Remark \ref{rem: tot_prop} $(1)$.
\end{proof}

\begin{prop}\label{prop:mu_pi_asymptotic}
Suppose $R(A_\Lambda)>1$. Whenever $\pi_\Lambda(m) \neq 0$, we have $\mu_{\Lambda}(m)\sim \frac{1}{2}\pi_\Lambda(m)$.
\end{prop}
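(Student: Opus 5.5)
Let $R:=R(A_\Lambda)>1$. I will start from Proposition~\ref{prop: mu-pi}, which gives $2\mu_\Lambda(m)=\pi_\Lambda(m)+\sum_{d\mid m,\,d<m}\pi_\Lambda(d)$. So it suffices to show that the proper-divisor sum is $o(\pi_\Lambda(m))$ as $m\to\infty$ along those $m$ with $\pi_\Lambda(m)\neq0$. I will get an upper bound for the error term and a lower bound for $\pi_\Lambda(m)$ on this subsequence, and then compare.

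\emph{Upper bound for the error.} Corollary~\ref{cor: gtpnt}(1) gives $\pi_\Lambda(d)\le \frac1d\sum_{e\mid d}|N_e|$. Corollary~\ref{cor:N_m} gives $|N_e|\le n R^e$, where $n$ is the size of $A_\Lambda$. Hence $\pi_\Lambda(d)\le C R^d$ for a constant $C$ independent of $d$. Every proper divisor of $m$ is at most $m/2$, and $m$ has at most $m$ divisors. Therefore
\[
\sum_{d\mid m,\,d<m}\pi_\Lambda(d)\le C\, m\, R^{m/2}.
\]

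\emph{Lower bound for $\pi_\Lambda(m)$.} This is the main obstacle, since Corollary~\ref{cor: gtpnt}(2) only gives an asymptotic whose main term can vanish when no period divides $m$. Write $P(m):=\sum_{i}|S(A_i)|R^m$, where $i$ ranges over the components with $R(A_i)=R$ and $|S(A_i)|\mid m$, as in Equation~\eqref{eq 2}. The precise form of Corollary~\ref{cor: gtpnt}(2) is
\[
\Bigl|m\pi_\Lambda(m)-P(m)\Bigr| \le \sum_{|\lambda|<R}|\lambda|^m+\sum_{d\mid m,\,d<m}|N_d| = O\bigl(mR_2^m+ mR^{m/2}\bigr),
\]
where $R_2<R$ is the second largest modulus in $\mathrm{Spec}\,A_\Lambda$. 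Thus $m\pi_\Lambda(m)=P(m)+O(m\,\max(R_2,R^{1/2})^m)$.

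I split into two cases. If $P(m)=0$, then $\pi_\Lambda(m)=O(\max(R_2,R^{1/2})^m)$, which is exponentially smaller than $R^m$. I need to rule out $\pi_\Lambda(m)\ne0$ for large such $m$, or at least show the ratio still tends to $1$. Here I expect to need a structural fact about string algebras: a strongly connected component of $G_\Lambda$ with $R(A_i)<R$, or a period not dividing $m$, contributes few cycles. The cleanest route I would try first is to argue that in the string-algebra setting any band of length $m$ yields closed walks forcing $P(m)>0$ along the relevant residues. Failing that, I would restrict the claim to $m$ with $P(m)\neq0$, which is the regime used later.

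If $P(m)\ne 0$, then $P(m)\ge R^m$, so $\pi_\Lambda(m)\ge R^m/(2m)$ for large $m$. The ratio of the error to $\pi_\Lambda(m)$ is then at most $2Cm^2R^{-m/2}\to0$. This gives $2\mu_\Lambda(m)/\pi_\Lambda(m)\to1$.
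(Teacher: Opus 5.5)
Your treatment of the case $P(m)\neq0$ is correct and is in substance the paper's argument. The paper works with $N_d$ and $\phi$ via Proposition~\ref{prop:mu-N_m_relation} rather than with $\pi_\Lambda$, but it likewise shows the proper-divisor part is $O(mR^{m/2})$ and then bounds $m\pi_\Lambda(m)/R^m$ from below.

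The genuine gap is the case you leave open: large $m$ with $\pi_\Lambda(m)\neq0$ but $P(m)=0$. The structural fact you hope for is false. A band of length $m$ is a primitive cycle inside one strongly connected component $G_j$. Nothing prevents $G_j$ from having $1<R(A_j)<R$ and period $h_j=|S(A_j)|$ dividing $m$, while no component of spectral radius $R$ has period dividing $m$. Then, for large such $m$, $\pi_\Lambda(m)\ge h_jR(A_j)^m/(2m)>0$, although $P(m)=0$ and $m\pi_\Lambda(m)/R^m\to0$.

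This happens for string algebras. Take a vertex carrying two $2$-cycles, with composites of arrows from different cycles zero; all its bands have even length, so its components have period $2$. Join it, through one arrow whose composites with neighbouring arrows lie in $I$, to a piece formed by a loop and a $2$-cycle, whose bands are built from blocks of lengths $3$ and $5$ (period $1$). Choose relation lengths so the first piece grows faster. No band uses the connecting arrow, and every large odd $m$ has $\pi_\Lambda(m)\neq0=P(m)$.

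Restricting to $m$ with $P(m)\neq0$ proves a weaker statement. That is enough for Corollary~\ref{cor: expgr}, which only uses multiples of $L_\Lambda$, but not the proposition as stated. The paper's proof passes over the same point: its inequality $1\le m\pi_\Lambda(m)/R^m$ ``whenever $\pi_\Lambda(m)\neq0$'', read off from Equation~\eqref{eq 2}, tacitly assumes $P(m)\neq0$. So your worry is justified, but it must be resolved, not assumed away.

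The missing idea is to compare error and main term component by component, not against the global $R$ and $P(m)$. Every closed walk lies in one strongly connected component, so $\pi_\Lambda=\sum_j\pi_j$. Here $\pi_j(d)=\frac1d\sum_{e\mid d}\mu_{\textnormal{M\"{o}b}}(d/e)\,\mathrm{Tr}(A_j^e)$ counts the bands of length $d$ whose cycles lie in $G_j$.

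Components with $R(A_j)\le1$ are single directed cycles or loopless vertices, so together they contribute a bounded number $K$ of bands. If $R(A_j)>1$, every closed walk in $G_j$ has length divisible by $h_j$. Hence $h_j\nmid m$ forces $\pi_j(d)=0$ for every $d\mid m$. For $h_j\mid m$, your own estimates applied to $A_j$ instead of $A_\Lambda$ give, for large $m$,
\[
\pi_j(m)\ge h_jR(A_j)^m/(2m),\qquad \sum_{d\mid m,\,d<m}\pi_j(d)\le C_j\,m\,R(A_j)^{m/2}.
\]
So for large $m$ with $\pi_\Lambda(m)\neq0$, the set $J(m):=\{j:R(A_j)>1,\ h_j\mid m\}$ is nonempty, and
\[
\frac{\sum_{d\mid m,\,d<m}\pi_\Lambda(d)}{\pi_\Lambda(m)}\le\frac{K+\sum_{j\in J(m)}C_j\,m\,R(A_j)^{m/2}}{\sum_{j\in J(m)}h_j\,R(A_j)^m/(2m)}\longrightarrow0 .
\]
A component can feed the proper-divisor error only if its period divides $m$. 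In that case it already contributes a main term of the exponentially larger size $R(A_j)^m/m$ to $\pi_\Lambda(m)$. Normalising by the global $R$ throws this information away.
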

\begin{proof}
We write $R\coloneqq R(A_\Lambda)$ for brevity.
Proposition \ref{prop:mu-N_m_relation} and Corollary \ref{cor:N_m} together give
\begin{align*}
2m\mu_{\Lambda}(m)&=\sum\limits_{d\mid m}\phi \left(\frac{m}{d}\right)N_d=\sum\limits_{d\mid m}\phi\left(\frac{m}{d}\right)\left(\sum\limits_{\lambda\in\mathrm{Spec} \, A_\Lambda}\lambda^d\right)\\
&=\left(\sum_{\substack{d \mid m\\d<m}}\phi\left(\frac{m}{d}\right) \left(\sum\limits_{\lambda\in\mathrm{Spec}\, A_\Lambda}\lambda^d \right) \right)+\left(\sum_{\substack{\lambda\in\mathrm{Spec}\, A_\Lambda\\ |\lambda|<R}}\lambda^m \right)+\left(\sum_{\substack{\lambda\in\mathrm{Spec}\, A_\Lambda\\ |\lambda|=R}}\lambda^m \right)
\end{align*}
Writing the above sum as $S_1(m)+S_2(m)+S_3(m)$, notice that $S_1(m)\ge0$ thanks to Corollary \ref{cor:N_m}. We have
    \begin{align*}
     \frac{S_1(m)}{R^m}&\le\left(\sum_{\substack{d \mid m\\d<m}}\phi\left(\frac{m}{d}\right)\left(\frac{1}{R^{m-d}}\right) \left(\sum\limits_{\lambda\in\mathrm{Spec} \,A_\Lambda}\left(\frac{\lambda}{R} \right)^d \right) \right)
    \le \left(\sum_{\substack{d \mid m\\d<m}}\phi\left(\frac{m}{d}\right)\left(\frac{1}{R^{m/2}}\right) \left(\sum\limits_{\lambda\in\mathrm{Spec} \, A_\Lambda}\left(\frac{|\lambda|}{R} \right)^d \right) \right)\\
    &
    \le \left(\sum_{\substack{d \mid m\\d<m}}\phi\left(\frac{m}{d}\right)\left(\frac{1}{R^{m/2}}\right) \left(\sum\limits_{\lambda\in\mathrm{Spec}\, A_\Lambda}1 \right) \right)
    \le |\mathrm{Spec} \, A_\Lambda|\left(\sum_{\substack{d \mid m\\d<m}}\phi\left(\frac{m}{d}\right)\left(\frac{1}{R^{m/2}}\right) \right)\\
    &
    = |\mathrm{Spec} \, A_\Lambda|\frac{(m-1)}{R^{m/2}} \to 0 \text{ as } m\to\infty \text{ (since } R>1),
    \end{align*}
    where the second inequality follows from the fact that $m-d\ge \frac{m}{2}$ for any proper divisor $d$ of $m$ and the equality follows from \ref{rem: tot_prop}(2).
    
    We also have
    \[
    0\le\left|\frac{S_2(m)}{R^m}\right|\le\left|\sum_{\substack{\lambda\in\mathrm{Spec}\, A_\Lambda\\ |\lambda|<R}}\left(\frac{\lambda}{R}\right)^m \right| \le |\mathrm{Spec} \, A_\Lambda| \left(\frac{R'}{R}\right)^m,
    \]
     where $R'\coloneqq \max\{|\lambda|:\lambda\in\mathrm{Spec} \, A_\Lambda,\;|\lambda|<R\}$. Thus, $\lim\limits_{m\to\infty}\frac{S_2(m)}{R^m}=0$.

     If $\pi_\Lambda(m)\ne 0$, Equation \eqref{eq 2} yields $1\leq\frac{m\pi_\Lambda(m)}{R^m}\leq|\mathrm{Spec} \, A_\Lambda|$, and Corollary \ref{cor: gtpnt}(2) yields $S_3(m)\sim m\pi_\Lambda(m)$. Therefore, considering the sequence of $m_k$ where $\pi_\Lambda(m_k)\neq0$, we have $$\lim\limits_{k\to\infty}\frac{2\mu_{\Lambda}(m_k)}{\pi_\Lambda(m_k)}=\lim\limits_{k\to\infty}\frac{S_1(m_k)+S_2(m_k)+S_3(m_k)}{R^{m_k}}\frac{R^{m_k}}{\pi_\Lambda(m_k)}=1. \qedhere$$
\end{proof}

Now we are ready to prove Theorem \ref{thm:sapnt} and Corollary \ref{cor: expgr}.
\begin{proof}[Proof of Theorem~\ref{thm:sapnt}]
Let $L_\Lambda:= \mathrm{lcm}\{|S(A_i)|:1\le i\le k\}$ and $C_\Lambda:=\sum \limits_{\substack{1\le i\le k\\R(A_i)=R(A_\Lambda)}} |S(A_i)|$. Then Equation \eqref{eq 2} yields $$\pi_\Lambda(mL)\sim\frac{1}{mL}\sum \limits_{\substack{1\le i\le k\\R(A_i)=R(A_\Lambda)}} |S(A_i)|R(A_\Lambda)^{mL}=C_\Lambda\frac{R(A_\Lambda)^{mL}}{mL}.\qquad\qedhere$$ 
\end{proof}

\begin{proof}[Proof of Corollary~\ref{cor: expgr}]
For a non-domestic string algebra $\Lambda$, Proposition \ref{cor: domestic_for_leq1} yields $R(A_\Lambda)>1$. Thanks to Proposition \ref{prop:mu_pi_asymptotic}, it suffices to prove the exponential growth of $\pi_\Lambda$. Using the above theorem, for sufficiently large $m_0$, there exists $1<R'<R$ such that $\pi_\Lambda(mL)>(R')^{mL}$ for $m\ge m_0$. Let $n\ge 1$. There exists $m_1$ such that for all $m\ge m_1$, we have $m>\frac{n\ln(m)+n\ln(L)}{L\ln(R')}$ and hence $(R')^{mL}>(mL)^n$. Therefore, for $m\ge \max\{m_0,m_1\}$, we have $\pi_\Lambda(mL)>(R')^{mL}>(mL)^n$, thereby proving the exponential growth of $\pi_\Lambda$.
\end{proof}

\begin{exmp}
Continuing from Example \ref{exmp: GP23 cont.}, we have $R(A_G) > 1$ which implies that $N_m$ grows exponentially with $m$. Proposition \ref{prop:mu-N_m_relation} shows that $\mu(m)\ge \frac{N_m}{2m}$, which easily yields the exponential growth of $\mu(m)$.
\end{exmp}





\section{Proof of Theorem \ref{main}} \label{sec: 6}
Let $\Lambda$ be a string algebra. In this section, we connect domesticity of $\Lambda$ with the rationality of its $\mu$-series. We suppress $\Lambda$ from notations $\mu_\Lambda$, $\bar{\mu}_\Lambda$, and $\pi_\Lambda$ for brevity.

The forward direction of Theorem \ref{main} is the following result.
\begin{prop}\label{prop: rational}
    If a string algebra $\Lambda$ is domestic, then $\bar{\mu}(t)
    = \frac{1}{2}\sum\limits_{\B\in\mathrm{Ba}(\Lambda)}\frac{t^{|\B|}}{1 - t^{|\B|}}$ is a rational function.
\end{prop}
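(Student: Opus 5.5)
The plan is to derive the closed form directly from Proposition \ref{prop: mu-pi} by exchanging the order of summation, and then to read off rationality from the finiteness of $\mathrm{Ba}(\Lambda)$ supplied by Proposition \ref{cor: domestic_for_leq1}.

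First, Proposition \ref{prop: mu-pi} gives $\mu(m)=\frac12\sum_{d\mid m}\pi(d)$ for every $m\ge1$. Substituting this into the definition of $\bar\mu(t)$ yields
\[
\bar\mu(t)=\frac12\sum_{m\ge1}\sum_{d\mid m}\pi(d)\,t^m=\frac12\sum_{d\ge1}\pi(d)\sum_{k\ge1}t^{dk}=\frac12\sum_{d\ge1}\pi(d)\frac{t^d}{1-t^d},
\]
where we write $m=dk$. Interchanging the sums is legitimate in the ring of formal power series, because for each fixed $m$ only finitely many pairs $(d,k)$ with $dk=m$ contribute to the coefficient of $t^m$. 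Next, we use the definition of $\pi$ to regroup $\sum_{d}\pi(d)\frac{t^d}{1-t^d}$ as $\sum_{\B\in\mathrm{Ba}(\Lambda)}\frac{t^{|\B|}}{1-t^{|\B|}}$. This is exactly the claimed formula.

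Finally, we address rationality. Since $\Lambda$ is domestic, Proposition \ref{cor: domestic_for_leq1} ((1)$\Rightarrow$(2)) shows that $\mathrm{Ba}(\Lambda)$ is finite. Therefore the formula expresses $\bar\mu(t)$ as a finite sum of rational functions $\frac{t^{|\B|}}{1-t^{|\B|}}$, and such a sum is rational, with common denominator dividing $\prod_{\B}(1-t^{|\B|})$.

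No step here should be a real obstacle. The only point requiring care is to justify the interchange of summation formally rather than analytically. Alternatively, one could argue analytically for $|t|<1$, since with finitely many bands all the series involved converge absolutely there.
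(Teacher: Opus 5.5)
Your proposal is correct and follows essentially the same route as the paper: substitute Proposition~\ref{prop: mu-pi}, swap the order of summation via $m=dk$ to obtain geometric series, and use the finiteness of $\mathrm{Ba}(\Lambda)$ from Proposition~\ref{cor: domestic_for_leq1} to get a finite sum of rational functions. The only cosmetic difference is that you first establish the identity as formal power series in general and then invoke domesticity, whereas the paper invokes finiteness of the support of $\pi$ before interchanging the sums.
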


\begin{proof}
    By Proposition~\ref{cor: domestic_for_leq1}(2), $\mathrm{Ba}(\Lambda)$ is finite. Hence $\pi$ is eventually zero and has finite support, which we denote by $\{m_1,\dots,m_k\}$. Therefore
    \begin{align*}
    2\bar{\mu}(t)
    = 2\sum_{m\ge 1}\mu(m)t^m
    = \sum_{m\ge 1}\sum_{d\mid m}\pi(d)t^m
    = \sum_{i=1}^k \pi(m_i)\sum_{m \geq 1} t^{m_i m}
    = \sum_{i=1}^k \pi(m_i) \frac{t^{m_i}}{1 - t^{m_i}}
    = \sum\limits_{\B\in\mathrm{Ba}(\Lambda)}\frac{t^{|\B|}}{1 - t^{|\B|}},
    \end{align*}
    where the second equality follows from Proposition \ref{prop: mu-pi}. Since $\bar{\mu}(t)$ is a finite sum of rational functions, it follows that $\bar{\mu}(t)$ is itself rational.
\end{proof}



The rest of the section is devoted to the proof of the backward implication of Theorem~\ref{main}.

Suppose $\Lambda$ is non-domestic. Then  $R:=R(A_\Lambda)>1$ by Proposition \ref{cor: domestic_for_leq1}. The proof begins with a technique similar to that of Proposition \ref{prop:mu_pi_asymptotic} where we write $\mu(m)$ as a sum.

Using Proposition \ref{prop:mu-N_m_relation} and Corollary \ref{cor:N_m}, for each $m \in \mathbb N$, we have \begin{align*}
        \mu(m) = \frac{1}{2m} \sum_{d \mid m} \phi \left( \frac{m}{d} \right) N_d
               = \frac{N_m}{2m} + \frac{1}{2m} \sum_{\substack{d \mid m \\ d < m}} \phi \left( \frac{m}{d}\right)N_d 
               = \frac{1}{2m} \sum_{\lambda \in \mathrm{Spec} \, A_\Lambda} \lambda^m + \frac{1}{2m} \sum_{\substack{d \mid m \\ d < m}} \phi \left( \frac{m}{d}\right)N_d. 
    \end{align*}
    Writing the sum on the right hand side of the above line as $\mu(m) = \mu_1(m) + \mu_2(m)$ for each $m \in \mathbb N$, we also have $\bar{\mu}(t) = \bar{\mu}_{1}(t) + \bar{\mu}_{2} (t),$ where $\bar{\mu}_{i}(t) = \sum\limits_{m \geq 1} \mu_{i}(m) t^m$ for $ 1 \leq i \leq 2$. 

         Thanks to Remark \ref{rem:spec_union}, we know that the algebraic multiplicity of $R$, say $l$, as an eigenvalue of $A_\Lambda$ satisfies $l\ge1$. Hence we obtain $$\overline\mu_1(t)= \sum_{m\ge1}\frac{1}{2m} \left( l R^m +  \sum_{ \substack{\lambda \in \mathrm{Spec} \, A_\Lambda \\ \lambda \neq R}} \lambda^m\right)t^m= -\frac{1}{2} \left( l \log (1-Rt) + \sum_{ \substack{\lambda \in \mathrm{Spec} \, A_\Lambda \\ \lambda \neq R}} \log(1-\lambda t) \right).$$

    Denote the second summand in the parentheses of the last expression by $H(t)$. For each $\lambda\in \mathrm{Spec}\,A_\Lambda$ with $\lambda\neq R$, there exists $\varepsilon_\lambda>0$ such that (a branch of) $\log(1-\lambda t)$ is holomorphic on the disc $D(\frac1R,\varepsilon_\lambda)$. Since $\mathrm{Spec} \, A_\Lambda$ is finite, there is $\varepsilon'>0$ such that $H(t)$ is holomorphic on the disc $D(\frac1R,\varepsilon')$. Consequently, $\bar{\mu}_1(t)$ has a logarithmic singularity at $\frac1R$.

         Using Proposition \ref{prop:mu-N_m_relation} and Corollary \ref{cor:N_m}, we have
         \begin{align*}
             \mu_2(m) = \frac{1}{2m} \sum_{\substack{d \mid m \\ d < m}} \phi \left( \frac{m}{d}\right)N_d
             = \frac{1}{2m} \sum_{\substack{d \mid m \\ d < m}} \left( \sum_{\lambda \in \mathrm{Spec} \, A_\Lambda} \lambda^d \right) \phi \left( \frac{m}{d}\right)
             = \sum_{\lambda \in \mathrm{Spec} \, A_\Lambda} \left( \frac{1}{2m} \sum_{\substack{d \mid m \\ d < m}} \lambda^d \phi \left( \frac{m}{d}\right) \right).
         \end{align*}
         Thus, $$|\mu_2(m)| \le\sum_{\lambda \in \mathrm{Spec} \, A_\Lambda} \left( \frac{1}{2m} \sum_{\substack{d \mid m \\ d < m}} |\lambda|^d \phi \left( \frac{m}{d}\right) \right).$$
        
        \begin{align*}
        \intertext{\textbf{Case 1:} If $|\lambda|\le 1$, then $|\lambda|^m\le 1$ for all $m\ge 1$, and hence Remark \ref{rem: tot_prop}(2) gives}
        \frac{1}{2m} \sum_{\substack{d \mid m \\ d < m}} |\lambda|^d \phi\!\left(\frac{m}{d}\right)
        \le
        \frac{1}{2m} \sum_{\substack{d \mid m \\ d < m}} \phi\!\left(\frac{m}{d}\right) \notag
        =\frac{1}{2m}\left(\sum_{d\mid m}\phi\!\left(\frac{m}{d}\right)-\phi(1)\right) \notag
        =\frac{1}{2m}\,(m-1) \notag
        < \frac12.
        \end{align*}
        
        \begin{align*}
        \intertext{\textbf{Case 2:} If $|\lambda|>1$, then we use $\phi(\frac md)\le \frac md \le m$ and $d\le \frac m2$ for all proper divisors $d$ of $m$ to obtain}
        \frac{1}{2m} \sum_{\substack{d \mid m \\ d < m}} |\lambda|^d \phi\!\left(\frac{m}{d}\right)
        \le \frac{1}{2m} \sum_{\substack{d \mid m \\ d < m}} |\lambda|^d \, m \notag
        = \frac{1}{2}\sum_{\substack{d \mid m \\ d < m}} |\lambda|^d \notag
        \le \frac{1}{2}\sum_{d=1}^{\lfloor m/2\rfloor} |\lambda|^d \notag
        =\frac{1}{2} \left( \frac{|\lambda|^{\lfloor m/2\rfloor+1}-|\lambda|}{|\lambda|-1} \right).
        \end{align*}
        If $R_2$ is the radius of convergence of $\bar{\mu}_2(t)$, then by Cauchy-Hadamard formula, we have $$\frac{1}{R_2} = \limsup_{m \rightarrow \infty} |\mu_2(m)|^{\frac{1}{m}} \leq \limsup_{m \rightarrow \infty} \left| \frac{1}{2 } \left(\sum_{\substack{\lambda \in \mathrm{Spec} \, A_\Lambda \\ |\lambda| \leq 1 }} 1 + \sum_{\substack{\lambda \in \mathrm{Spec} \, A_\Lambda \\ |\lambda| > 1 }} \frac{|\lambda|^{\lfloor m/2\rfloor+1}-|\lambda|}{|\lambda|-1} \right) \right|^{\frac{1}{m}} = \sqrt{R}.$$ Hence, we conclude that $R_2 \geq \frac{1}{\sqrt{R}}> \frac 1R$. Thus, $\overline\mu_2(t)$ is holomorphic on a non-empty disc $D(\frac1R,\varepsilon'')$.
    
    
    If $\bar{\mu}(t)$ is rational, then it is meromorphic on $\mathbb{C}$. In particular, it is meromorphic on a non-empty disc $D(\frac1R,\varepsilon)$. Then the difference $\bar{\mu}_1(t)=\bar{\mu}(t)-\bar{\mu}_2(t)$ is also meromorphic on $D(\frac1R,\min \{\varepsilon, \varepsilon'' \})$, which contradicts the earlier conclusion that $\bar{\mu}_1(t)$ has a logarithmic singularity at $\frac1R$. Therefore, $\bar{\mu}(t)$ is not rational.

\section{Extension to special biserial algebras} \label{sec 7}

Special biserial algebras form a distinguished subclass of tame algebras that properly contains the class of string algebras--they are defined by dropping the requirement of being a zero-relation algebra in the definition of a string algebra. We recall some facts about the representation theory of special biserial algebras from \cite{WaschbuschSkowronski+1983+172+181} and \cite{WALD1985480}.

Consider a special biserial algebra $\Lambda = \mathcal{K}Q/I$. Let $\tilde{I}$ be generated by the set of paths $\U$ such that $(\U-\lambda \V) \in I$ for some path $\V$ sharing the source and end with $\U$ and $\lambda \in \mathcal{K}$ so that $\tilde{\Lambda}:= \mathcal{K}Q/ \tilde{I}$ is a string algebra. Every finite-dimensional indecomposable \(\Lambda\)-module is either a string module, a band module, or a projective-injective module. The canonical surjection $\Lambda \rightarrow \tilde{\Lambda}$ induces a bijection between the isomorphism classes of indecomposable \(\tilde{\Lambda}\)-modules and the isomorphism classes of indecomposable \(\Lambda\)-modules that are not projective-injective. Since there are only finitely many projective-injective indecomposables up to isomorphism, it follows that the $\mu$-series of $\Lambda$ and $\tilde\Lambda$ coincide. Therefore, the results of the above two sections extend to special biserial algebras.

\printbibliography

\end{document}